\newtheorem{thm}{Theorem}[section]
\newtheorem{lemma}[thm]{Lemma}
\newtheorem{corol}[thm]{Corollary}
\newtheorem{prop}[thm]{Proposition}
\newtheorem{conj}{Conjecture}
\newtheorem{defin}[thm]{Definition}
\newtheorem{rem}[thm]{Remark}
\newtheorem{example}[thm]{Example}
\renewcommand{\S}{\mathfrak{S}}
\newcommand{\Z}{\mathbb{Z}}
\newcommand{\N}{\mathbb{N}}
\newcommand{\B}{\mathcal{B}}
\newcommand{\x}{r_{\sigma}}
\newcommand{\y}{\ell_{\tau}}
\newcommand{\Rs}{R_{\sigma}}
\newcommand{\Lt}{L_{\tau}}
\newcommand{\move}{\rightarrow}
\newcommand{\orb}{\mathcal{O}}
\newcommand{\id}{{\text{id}}}
\newcommand{\desc}{\overline{\text{id}}}
\newcommand{\intval}[1]{\llbracket #1 \rrbracket}
\newcommand{\vsum}{\obar}
\newcommand{\hsum}{\ominus}
\newcommand{\dsum}{\obslash}
\newcommand{\usum}{\odot}
\newcommand{\filling}{\varphi}
\definecolor{colorx}{RGB}{136, 189, 30}
\definecolor{colory}{RGB}{212, 43, 11}
\definecolor{colorz}{RGB}{20, 65, 204}
\newcommand{\blue}[1]{\textcolor{colorz}{#1}}
\newcommand{\red}[1]{\textcolor{colory}{#1}}
\newcommand{\green}[1]{\textcolor{colorx}{#1}}
\newcommand{\ddincr}{(\blue{12}, \red{12})}
\newcommand{\pat}{(\blue{312}, \red{231})}
\newcommand{\sigtau}{(\blue{\sigma}, \red{\tau})}
\newcommand{\colpair}[2]{(\blue{#1}, \red{#2})}
\newcommand{\A}{Av_n(\ddincr, \pat)}
\newcommand{\Psibis}{\widehat{\Psi}}
\newcommand{\loz}{\mathcal{L}}
\newcommand{\Yline}{\mathcal{Y}}
\newcommand{\lline}{\mathbbm{l}}
\newcommand{\rline}{\mathbbm{r}}
\newcommand{\dline}{\mathbbm{d}}
\title{3D permutations and triangle solitaire}
\author{Juliette Schabanel$^\dagger$
}
\date{$^\dagger$LaBRI, Université de Bordeaux, France}
\begin{document}

\maketitle

\begin{abstract}
    We provide a bijection between a class of $3$-dimensional pattern avoiding permutations and triangle bases, special sets of integer points arising from the theory of tilings and TEP subshifts. This answers a conjecture of Bonichon and Morel.
\end{abstract}

\section{Introduction}

In this paper, we build a bijection between objects that are seemingly unrelated: a class of pattern avoiding $3$-dimensional permutations and objects arising from the theory of subshifts called triangle bases, that was conjectured by Bonichon and Morel in \cite{BoMo22}.

Permutations are central objects in combinatorics and their study has received a lot of attention. One topic of particular interest is pattern avoidance in permutations, which led to numerous enumerative and bijective results with various objects (see \cite{Kit11, Bona} and references therein).

One can see a permutation $\sigma$ as a ($2$-dimensional) diagram with points $(i, \sigma(i))$, which satisfies the property that in each row and column lies a unique point. With this definition comes a natural generalization to higher dimensions by defining $d$-dimensional diagrams, and the notion of pattern avoidance extends naturally~\cite{AsMa10, BoMo22, PermutMach}.

The second object arises from the theory of \emph{tilings} and \emph{symbolic dynamics}. A tiling is a coloring of the grid $\Z^2$ (or of a domain of $\Z^2$) with allowed subpatterns determined by a rule set $\mathcal{R}$. 
The set of tilings respecting a certain rule set is called a \emph{subshift} \cite{Ho16}. Tilings have been widely studied during the last decades and many questions, such as the \emph{tiling problem}, which asks whether $\Z^2$ can be tiled with a given rule set, 
were proved undecidable \cite{Ber66}. However, for some classes with strong structure or combinatorial properties, such as cellular automata, some problems become decidable and a deeper study can be carried \cite{Kari05}. The class we consider in this paper, named TEP for \emph{totally extremally permutive}, was introduced by Salo in \cite{Sa22} and generalizes bipermutive cellular automata. 
The language of any TEP subshift is decidable, and they possess bases, i.e. sets of cells whose content can be chosen freely and determine the values of a larger set. Our objects of interest are the bases of triangles for the \emph{Ledrappier subshift}.

In \cite{BoMo22}, Bonichon and Morel listed classes of higher dimensional permutations avoiding small patterns and computed the first terms of the enumeration sequences these classes. It turned out that several of those sequences match existing sequences in the OEIS. For some, such as $Av(\ddincr)$ that matches the number of intervals in the weak Bruhat order, they were able to provide a bijection, but most were only conjectures. Some of these conjectures suggest that bijections for classes of pattern avoiding permutations could generalize to higher dimensions. One of these conjectures was recently proven by Bonichon, Muller and Tanasa \cite{dBaxterBMT25}, who built a bijection between $d$-Baxter permutations, which generalize Baxter permutations to higher dimensions, and high dimension floorplans. Other conjectures make links between higher dimensional pattern avoiding permutations and objects that are not known to be related to them. Of interest to us, Bonichon and Morel observed that the sets of triangle bases of size $n$ and $\A$ have the same cardinalities up to $n=8$, and they conjectured that these two sets are in bijection.
Our main result is to prove their conjecture:

\begin{thm}
    There is an explicit bijection between $3$-permutations of size $n$ avoiding patterns $\ddincr$ and $\pat$ and triangle bases of size $n$.
\end{thm}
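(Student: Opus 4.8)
The plan is to construct the bijection explicitly and to govern it through the triangle solitaire. It is convenient to first re-read both sides. On the permutation side, a $3$-permutation $(\sigma,\tau)$ avoids $\ddincr$ exactly when $\mathrm{Inv}(\sigma)\cup\mathrm{Inv}(\tau)$ is the set of all pairs of positions; writing $\pi=w_0\sigma$ this becomes $\mathrm{Inv}(\pi)\subseteq\mathrm{Inv}(\tau)$, i.e.\ $\pi\le\tau$ in the right weak order, while the extra pattern $\pat$ translates into $(\mathbf{132},\mathbf{231})$-avoidance for $(\pi,\tau)$; thus $\A$ is identified with a distinguished sub-family of the weak-order intervals. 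On the subshift side, we use the description of triangle bases established earlier: a triangle basis of size $n$ is the same thing as a configuration reachable from the ground state by a sequence of legal solitaire moves, and from such a configuration one can read back, move by move, the history that produced it.

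I would define $\Phi\colon\A\to\{\text{triangle bases of size }n\}$ by a left-to-right sweep. List the points of $(\sigma,\tau)$ in increasing order of first coordinate and process the pairs $(\sigma(i),\tau(i))$, $i=1,\dots,n$, one at a time, maintaining a configuration of tokens in the triangle; the move(s) performed at step $i$ are dictated by the position of $(\sigma(i),\tau(i))$ relative to the points already seen — concretely by the rank of $\sigma(i)$ among $\sigma(1),\dots,\sigma(i-1)$ and of $\tau(i)$ among $\tau(1),\dots,\tau(i-1)$. Avoidance of $\ddincr$ determines which of the two admissible families of moves applies at each step, and avoidance of $\pat$ guarantees that the move the rule calls for is legal, i.e.\ the cell that has to be vacated is free. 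Put $\Phi(\sigma,\tau)$ equal to the configuration reached after the $n$-th step.

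It then remains to verify three things. \emph{Well-definedness}: by induction on $i$, the configuration after step $i$ is a genuine basis; the inductive step is an application of the basis-exchange axiom to the fundamental circuit cut out by a single three-cell Ledrappier relation, and this is exactly where avoidance of $\ddincr$ and $\pat$ is needed, to exclude the configuration that would obstruct the exchange. \emph{Injectivity}: reading the final configuration from the outside in and repeatedly undoing the last move, one recovers the prescribed moves and hence the pairs $(\sigma(i),\tau(i))$, so $\Phi$ has an explicit left inverse $\Psi$. \emph{Surjectivity}: given an arbitrary triangle basis, the solitaire characterisation lets one trace it back to the ground state, and one checks that the resulting history is always of the admissible form produced by the sweep, so $\Psi$ is defined on every triangle basis and $\Phi\circ\Psi=\mathrm{id}$.

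The hard part is the well-definedness in the first item. It amounts to running the solitaire and the sweep in parallel and proving the equivalence ``the prescribed move is blocked $\iff$ an occurrence of $\ddincr$ or of $\pat$ has appeared among the points read so far'' — equivalently, to a precise dictionary between the nonzero Ledrappier configurations supported on the complement of a candidate cell set and the explicit occurrences of the two forbidden patterns in $(\sigma,\tau)$. This passage between the linear algebra of the subshift and the pattern combinatorics of $3$-permutations is the technical heart of the theorem; once it is in place, injectivity and (granting the solitaire characterisation) surjectivity are routine bookkeeping.
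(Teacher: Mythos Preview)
Your proposal is not a proof: it is a plan that explicitly stops short of the step you yourself call ``the technical heart of the theorem''. You never actually define the map --- saying that the moves at step $i$ are ``dictated by the rank of $\sigma(i)$ among $\sigma(1),\dots,\sigma(i-1)$ and of $\tau(i)$ among $\tau(1),\dots,\tau(i-1)$'' does not specify which token moves where --- and you never prove the claimed equivalence ``the prescribed move is blocked $\iff$ an occurrence of $\ddincr$ or $\pat$ has appeared''. That equivalence is precisely what must be established; asserting that once it is in place the rest is routine is not a substitute for establishing it. There is also a structural mismatch: a left-to-right sweep gives you access only to $l_\sigma(i)$ and $l_\tau(i)$ at step $i$, whereas any workable coordinate for the first axis must encode information about positions $j>i$ (the paper uses $r_\sigma(i)$); it is not clear your sweep can produce the right target cell without look-ahead, and you do not address this.

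By contrast, the paper does not build the bijection through the solitaire at all. It writes the map down in one line --- send $i$ to $(r_\sigma(i),\,l_\tau(i))$ --- and then proves bijectivity by a recursive decomposition: it introduces \emph{shifted sums} (vertical, horizontal, diagonal) on both sides, shows that triangle bases are exactly the configurations obtained by iterated shifted sums of single points, proves that $\Gamma$ transports cuts in both directions, and finally proves by a (long) case analysis that every $3$-permutation in $\A$ of size $\ge 2$ admits a cut. The solitaire enters only afterwards, as a consequence and a tool for sampling, not as the engine of the bijection. If you want to pursue a solitaire-based proof, you would at minimum need to (i) give the explicit rule for the move at each step, (ii) prove the blocked-move/forbidden-pattern dictionary, and (iii) show that the inverse reading really recovers $(\sigma,\tau)$ uniquely; none of these is done here.
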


The idea of our construction is to use the numbers inversions in the $3$-permutation to compute the coordinates of the points of the basis. The inverse bijection is obtained by converting triangle basis into \emph{fine mixed subdivisions} -or \emph{lozenge tiling}-, objects from discrete geometry which Ardila and Ceballos proved to be naturally associated to $3$-permutations avoiding $\ddincr$ \cite{AcyclicSystAC2013}. This correspondence is not injective in the general case, but we prove that when restricted to a certain class of lozenge tilings it becomes a bijection with $3$-permutations avoiding $\ddincr$ and $\pat$. 

Our bijection makes a link between objects from two previously independent classes which are hard to enumerate, and it allows us to transport structure properties and methods from one class to the other, such as a random sampling method relying on a Markov chain. In addition to that, the construction is quite simple, brings keys to understand better both objects, and could be applied to other classes of pattern avoiding $d$-permutations (even for $d>3$), maybe leading to new bijections.

\paragraph{Structure of the paper}
Precise definitions are given Section~\ref{sec:defs}. The bijection from $3$-permutations to triangle bases is described in Section~\ref{sec:gamma} and the inverse construction is built in Section~\ref{sec:psi}, relying on objects from discrete geometry known as \emph{fine mixed subdivisions} or \emph{lozenge tilings}. In Section~\ref{sec:proof}, we prove that both mappings have the announced image set and that they are inverse of each other. In Sections~\ref{sec:sums}, we describe a decomposition of bases and explain how it translates to a decomposition of $3$-permutations avoiding $\ddincr$ and $\pat$ with a notion of sums analogous to the one for separable permutations, but with a shift. In Section~\ref{sec:solitaire}, we describe a dynamical system on bases, the \emph{solitaire}, which translates to permutations though our bijection and could enable uniform sampling. Finally, in Section~\ref{sec:discu} we discuss some generalizations of the bijection as well as related open problems.

\section{Definitions and setting}
\label{sec:defs}

\subsection{3-Permutations avoiding a pattern}
\label{sec:permut}

\subsubsection{Permutations avoiding a pattern}

A permutation $\sigma = \sigma(1)\sigma(2) \ldots \sigma(n) \in \S_n$ is a bijection from $\intval{1, n} = \{1, 2, \ldots n\}$ to itself. 
The (2 dimensional) \emph{diagram} of a permutation $\sigma \in \S_n$ is the set of points $P_\sigma:=\{(1, \sigma(1)), \ldots, (n,\sigma(n))\}$ (see Figure~\ref{fig:permut ex}). The diagrams of permutations of size $n$ are exactly the point sets such that each row and column of $\intval{1,n}^2$ contains exactly one point.

A permutation $\sigma$ \emph{contains} a \emph{pattern} $\pi \in \S_k$ if there is a set of indices $i_1 < i_2 < \ldots < i_k$ such that $\sigma(i_1)\sigma(i_2)\ldots\sigma(i_k) = \pi$ (once standardized), the set $(i_1, i_2, \ldots , i_k)$ is then called an \emph{occurrence} of $\pi$ in $\sigma$. Otherwise, we say that $\sigma$ \emph{avoids} $\pi$. Given a set of patterns $\pi_1, \ldots, \pi_l$, we denote by $Av_n(\pi_1, \ldots, \pi_l)$ the set of permutations of size $n$ avoiding all the $\pi_i$s.

\begin{figure}[ht]
    \centering
    \includegraphics[page=1, height=2.5cm]{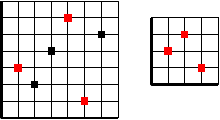}
    \caption{The permutation $\sigma = 324615$ with an occurrence of the pattern $231$ in red.}
    \label{fig:permut ex}
\end{figure}

\subsubsection{Higher dimensional permutations}

\begin{defin}
    A \emph{$d$-permutation} (or \emph{$d$-dimensional permutation}) of size $n$, is a tuple $\pmb{\sigma} = (\sigma_1, \ldots, \sigma_{d-1})$ of $d-1$ permutations of size $n$. We denote by $\S_n^{d-1}$ the set of these objects. 
    The \emph{diagram} of a $d$-permutation is the set of points $P_{\pmb{\sigma}} \coloneqq \{(i, \sigma_1(i), \ldots, \sigma_{d-1}(i)) \mid 1 \leqslant i \leqslant n\}\in\Z^d$. Note that $d$ is the dimension of the diagram.
\end{defin}
\vspace{-0.2cm}
The diagrams of $d$-permutations of size $n$ are exactly the point sets such that every hyperplane $x_i = j$ with $i \in \intval{1, d}$ and $j \in \intval{1, n}$ contains exactly one point. Examples are given in Figure~\ref{fig:forb pattern}.

\smallskip
The definition of pattern avoidance extends naturally to $d$-permutations as follows. Given $\boldsymbol{\pi} \in \S_k^{d-1}$ a pattern, a $d$-permutation $\boldsymbol{\sigma} \in \S_n^{d-1}$ \emph{contains} $\boldsymbol{\pi}$ if there is a set of indices $I \subset \intval{1, n}$ such that $\boldsymbol{\sigma}_{|I} = \boldsymbol{\pi}$ (once standardized), otherwise it \emph{avoids} it. 
Given a set of $d$-patterns $\boldsymbol{\pi}_1, \ldots, \boldsymbol{\pi_k}$ we denote by $Av_n(\boldsymbol{\pi}_1, \ldots, \boldsymbol{\pi}_k)$ the set of $d$-permutations of size $n$ that avoid all the $\boldsymbol{\pi}_i$. 

In what follows, we focus on $3$-permutations avoiding the two patterns $(\blue{12}, \red{12})$ and $(\blue{312}, \red{231})$, which are depicted in Figure~\ref{fig:forb pattern}.

\begin{figure}[ht]
    \centering
    \includegraphics[page=2, height=3cm]{Figures/Fig_permut.pdf}
    \quad
    \includegraphics[page=3, height=3cm]{Figures/Fig_permut.pdf}
    \quad \quad \quad \quad
    \includegraphics[page=4, height=4cm]{Figures/Fig_permut.pdf}
    \caption{Left: The two forbidden patterns: $\ddincr$ (left) and $\pat$ (right). \\ Right: A $3$-permutation, $\colpair{54231}{32514}$, with an occurrence of $\pat$ at the position $(1,3,4)$, highlighted in orange.} 
    \label{fig:forb pattern}
\end{figure}

\begin{rem}
    Note that a $3$-permutation avoiding a $3$-pattern is not the same as a pair of permutations each avoiding a pattern since we require the occurrence to be on the same indices for both permutations. For example $\colpair{312}{231}$ avoids $\ddincr$ although both $312$ and $231$ contain $12$.
\end{rem}

\subsection{Tilings, configurations and bases}
\label{sec:basis}

Our other class of objects, \emph{triangle bases}, arises from the study of tilings, which we now define.

\subsubsection{Tilings}

Let $A$ be a set of symbols and $S$ be a finite subset of $\Z^2$. Let $\mathcal{R}$ be a subset of $A^S$. A \emph{tiling} with \emph{tile set} $A$ and \emph{allowed patterns} $\mathcal{R}$ is a coloring $c: D\subset\Z^2 \to A$ of a domain $D$ of the grid $\Z^2$ with symbols of $A$ such that for each translation $S'$ of $S$ we have $c_{|S'} \in \mathcal{R}$. In other words, we require that wherever one looks at the coloring through an $S$-shaped window, what one sees is in $\mathcal{R}$. The elements of $A^S \setminus \mathcal{R}$ are called the \emph{forbidden patterns} and the set of valid tilings for $\mathcal{R}$ is called the \emph{subshift} (of finite type) with rule set $\mathcal{R}$. Figure~\ref{fig:tiling ex} provides an example.

\begin{figure}[ht]
    \centering
    \includegraphics[page=1, height=2.2cm]{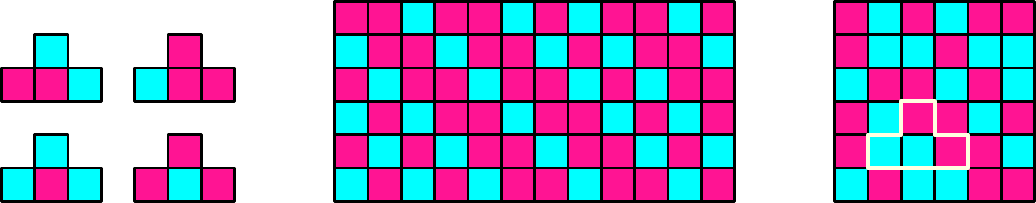}
    \caption{An example of a tiling. Left: the rule set $\mathcal{R}$. Middle: a valid tiling. \\Right: an invalid tiling with a forbidden pattern highlighted.}
    \label{fig:tiling ex}
\end{figure}

In what follows, we consider a special kind of subshifts, called \emph{TEP subshifts} (where TEP stands for Totally Extremally Permutive), which were introduced in \cite{Sa22}. A subshift is \emph{TEP} if for all $x \in S$, for each partial coloring $c: S\setminus \{x\} \to A$, there is a unique $a \in A$ such that the extension of $c$ with $c(x) = a$ is in $\mathcal{R}$. In other words, if one fills all cells of $S$ but one with arbitrary symbols of $A$, then there is a unique way to complete it into an allowed pattern. An example, the XOR automaton tiling (also known as the Ledrappier subshift), is given in Figure~\ref{fig:XOR rule}. In what follows, we only consider TEP subshifts with $S = \{(0,0), (1,0), (0,1)\}$.

\begin{figure}[ht]
    \centering
    \includegraphics[page=2, height = 2.2cm]{Figures/Fig_basis.pdf}
    \caption{Left: The tiling rules of the XOR automaton, also known as the Ledrappier subshift. Right: An example of a valid tiling. Adding or removing a pattern from $\mathcal{R}$ breaks the TEP property.}
    \label{fig:XOR rule}
\end{figure}

\subsubsection{Filling}

When confronted with a TEP subshift, one natural question to ask is ``given a set of cells $P$ whose contents are known, what other values can be deduced?''. The following definition aims at formalizing this notion. 

\begin{defin}
   Let $P\subset \Z^2$ be a set of cells. Performing a \emph{filling step} of $P$ consists in choosing a position $(x,y)$ such that $|P \cap \{(x,y), (x+1,y), (x,y+1)\}| = 2$ and adding the missing point to $P$ (see Figure~\ref{fig:filling} for an example). This process is confluent and converges to a limit set, denoted $\filling(P)$, called the \emph{filling} of $P$ \cite{SaSc23}. We say that $P$ \emph{fills} if its filling is $\varphi(P) = T_{|P|}$.
\end{defin}

Intuitively, if we know the values of a tiling on $P$, then a filling step consists in using the tiling rules to deduce the value of a new cell and adding it to the known set. The filling of $P$ is then the set of values which can always be deduced from $P$.

\begin{figure}[ht]
    \centering
    \includegraphics[page=4, width=0.9\textwidth]{Figures/Fig_basis.pdf}
    \caption{An example of execution of the filing process. Dark gray cells are the original set, colored ones are added by the current step and light gray cells were added earlier.}
    \label{fig:filling}
\end{figure}

A \emph{triangle} of size $k \in \N$ is a set $T = \{(x, y) \mid a \leqslant x , b \leqslant y \text{ and } x+y \leqslant a+b+k\}$ for some $a, b \in \Z$. Two sets of points $A, B \subset \Z^2$ \emph{touch} if there are points $a \in A$ and $b \in B$ such that $a=b$ or $a$ and $b$ differ by $(0,1)$, $(1,0)$ or $(1, -1)$, meaning either $a-b$ or $b-a$ is one of these vectors.

\begin{thm}[{\cite[Lemma 2]{SaSc23}}]
    For any set of cells $P$, the set $\varphi(P)$ is a union of non touching triangles whose sizes sum to at most $|P|$.
\end{thm}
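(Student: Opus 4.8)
The plan is to analyze the filling process step by step and track how it decomposes $P$ into triangle-like components. First I would set up the key invariant: at every intermediate stage of the filling (and in particular at the limit $\varphi(P)$), the current set is a union of pairwise non-touching triangles, together with some "unresolved" points that have not yet been absorbed into triangles. More precisely, I would prove the stronger statement that $\varphi(P)$ can be written as a disjoint union of triangles $T_1, \dots, T_m$ with $\sum_i \mathrm{size}(T_i) \leqslant |P|$ and such that no two $T_i$ touch; the "at most $|P|$" bound is the part that needs the counting argument, while the structural part (union of non-touching triangles) needs the confluence and a closure argument.

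For the structural part, the main observation is that the filling operation is \emph{local} and \emph{monotone}: each filling step completes an "L-corner" $\{(x,y),(x+1,y),(x,y+1)\}$ that already has two of its three cells present. I would argue that a set $Q$ is \emph{closed} under filling (i.e. $\varphi(Q) = Q$) if and only if it contains no such incomplete L-corner, and then show that a set with this property which is also "connected" in the appropriate touching-adjacency sense must be exactly a triangle. The rough reason: take a closed connected set $Q$; consider the topmost-leftmost geometry, i.e. look at the minimal $a$ with a point in column $a$ and the minimal $b$ with a point in row $b$ meeting it, and use closedness repeatedly to force all cells below the anti-diagonal line $x + y = k$ (where $k$ is determined by the extremal point) to be present, and use the touching condition in reverse to forbid any point strictly above that line. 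So each touching-connected component of $\varphi(P)$ is a triangle, and distinct components do not touch by definition of "component"; this gives the "union of non-touching triangles" conclusion. Confluence of the filling process (already asserted in the definition) guarantees $\varphi(P)$ is well-defined so this analysis is independent of the order of steps.

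For the size bound $\sum_i \mathrm{size}(T_i) \leqslant |P|$, I would use a potential/weight argument. Assign to any finite set $Q \subset \Z^2$ a quantity measuring "how much triangle content is forced", for instance $\Phi(Q) = \sum_{C} \mathrm{size}(C)$ summed over the triangle-components of $\varphi(Q)$ — but to make this trackable I would instead directly induct on $|P|$. Remove a point $p$ from $P$ to get $P'$; by induction $\varphi(P')$ is a union of non-touching triangles with total size $\leqslant |P| - 1$. Then $\varphi(P) = \varphi(\varphi(P') \cup \{p\})$, and I must show that adding a single point and re-closing increases the total triangle size by at most $1$. Adding $p$ can either (a) fall outside and not touch any existing triangle, creating a new triangle of size $1$ (the singleton) or merging nothing — size grows by $1$; or (b) touch one or more existing triangles, in which case the re-filling may fuse them and grow them, and the crux is a local inequality showing the new total size is at most the old total plus one. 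This "adding one point adds at most one unit of triangle size" lemma is where the real combinatorial work lies, and I expect it to be \textbf{the main obstacle}: one must understand exactly how a triangle (or several adjacent triangles) can grow when a single new forced cell is introduced on its boundary, and check that the arithmetic of triangle sizes ($\mathrm{size}(T) = $ the parameter $k$, with $\binom{k+1}{2}$-ish many cells) never lets the sum of sizes jump by more than one. I would handle this by a careful case analysis on the position of $p$ relative to the anti-diagonals of the triangles it touches, possibly reusing or citing the structural lemma from \cite{SaSc23} if the argument there is phrased compatibly, and otherwise proving the one-point increment bound by tracking the extremal anti-diagonal $x+y=k$ of each affected component and showing $p$ can raise it by at most $1$ while merging at most the components that were already "anti-diagonally aligned" with it.
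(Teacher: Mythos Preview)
This theorem is not proved in the present paper: it is stated with a citation to \cite{SaSc23} and used as a black box, so there is no in-paper argument to compare your attempt against.

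As a standalone sketch, your outline is plausible. The structural claim --- that a filling-closed, touching-connected finite set must be a single triangle --- is correct and can be made rigorous essentially as you describe (any two touching cells in a closed set force the third cell of their common $L$-corner, and iterating this along rows, columns and anti-diagonals fills out a full triangle). For the size bound, you correctly identify the crux as the one-point increment lemma (``adding one cell to a union of non-touching triangles and refilling increases the total triangle size by at most~$1$''), but you do not actually prove it: you defer to an unspecified case analysis and even propose citing \cite{SaSc23} itself, which would be circular here. When the added point touches several triangles simultaneously (possibly on different sides), this lemma needs a genuine argument about how their sizes and anti-diagonals combine, not just a gesture at ``tracking the extremal anti-diagonal''. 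So your proposal names the right skeleton but leaves the one substantive step open.
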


\subsubsection{Independence and Bases}

The other natural question is ``given a set of cells $P$, can any choice of symbols for $P$ be extended into a valid tiling of $\Z^2$?''. When it is the case, we say that $P$ is \emph{independent}. 

A set $P$ generates a \emph{conflict} at a cell $x$ if there are two distinct minimal subsets $X$ and $Y$ of $P$ that contain $x$ in their fillings. In that case, $P$ is not independent as some choices for $X$ and $Y$ can disagree on the value at $x$.

Denote $T_n = \{(x,y) \mid x,y \in \N, x+y < n\}$ the triangle of size $n$. A set of cells $P$ is a \emph{basis} (of $T_n$) if for any partial coloring $c: P \to A$, by iteratively deducing values using the TEP rules of $\mathcal{R}$ we always end up with a valid coloring of $T_n$ (i.e. all of $T_n$ is determined by $P$ and there is no conflict). See Figure~\ref{fig:basis} for examples. We denote by $\mathcal{B}_n$ the set of bases of $T_n$. It is proved in \cite{SaSc23} that each basis of $T_n$ contains exactly $n$ points. We call \emph{configuration} of size $n$ a set $C \subset T_n$ of $n$ points.

\begin{figure}[ht]
    \centering
    \includegraphics[page=6, height = 2.5cm]{Figures/Fig_basis.pdf}
    \caption{Configurations of size $5$ (dark cells) with their filling (light cells). Left: A basis of $T_5$. Middle: A non independent pattern. Right: A non filling pattern.}
    \label{fig:basis}
\end{figure}

\begin{thm}[{\cite[Theorem 1]{SaSc23}}]
    A configuration of size $n$ is a triangle basis if and only if its filling is $T_n$.
\end{thm}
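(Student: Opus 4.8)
The plan is to extract both directions from the first theorem of \cite{SaSc23} quoted above (the structure of $\varphi(P)$ as a union of non-touching triangles of total size at most $|P|$), the only extra ingredient being the numerical identity it exploits: the triangle $T_n$ has $\tfrac{n(n+1)}{2}$ cells, it contains exactly $\tfrac{n(n-1)}{2}$ ``corners'' $L_w \coloneqq \{w,\,w+(1,0),\,w+(0,1)\}$ with $L_w\subseteq T_n$ (these are the $w=(x,y)$ with $x,y\geqslant 0$ and $x+y\leqslant n-2$), and $\tfrac{n(n+1)}{2}-n=\tfrac{n(n-1)}{2}$. So a configuration of size $n$ whose filling is all of $T_n$ is completed in exactly as many filling steps as there are corners of $T_n$; this rigidity is what forces the absence of conflicts.

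For the implication ``$C$ a basis $\Rightarrow$ $\varphi(C)=T_n$'': if $C$ is a basis then every cell of $T_n$ is obtained from $C$ by iterated TEP deductions, each of which is a filling step, so $T_n\subseteq\varphi(C)$. The structure theorem writes $\varphi(C)$ as a disjoint union of pairwise non-touching triangles whose sizes sum to at most $|C|=n$. Every cell of $T_n$ can be joined to $(0,0)$ by a path of cells of $T_n$ in which consecutive cells touch (decrease one coordinate at a time), so $T_n$ lies inside a single one of these triangles, say $T'\supseteq T_n$. Writing a triangle in the normalized form $\{(x,y):x\geqslant a,\ y\geqslant b,\ x+y\leqslant h\}$, the inclusion forces $a,b\leqslant 0$ and $h\geqslant n-1$, hence $\operatorname{size}(T')=h-(a+b)+1\geqslant n$; together with $\operatorname{size}(T')\leqslant n$ this gives $a=b=0$, $h=n-1$, i.e.\ $T'=T_n$, and leaves no room for any further component. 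Thus $\varphi(C)=T_n$.

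For the converse, assume $\varphi(C)=T_n$. Completeness of the deductions is immediate since $\varphi(C)=T_n$ and the filling of a set is confluent, so it only remains to see that $C$ is conflict-free, equivalently that every colouring of $C$ extends in a unique way to a colouring of $T_n$ making every corner of $T_n$ legal. First I would observe that every maximal run of the filling process from $C$ uses each corner of $T_n$ exactly once: any filling step acts on a corner $L_w$ all three of whose cells lie in $\varphi(C)=T_n$, so $L_w$ is one of the $\tfrac{n(n-1)}{2}$ corners of $T_n$; once the three cells of $L_w$ are present the step at $w$ can never recur; and there are exactly $|T_n|-|C|=\tfrac{n(n-1)}{2}$ steps. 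Now run such a process at the level of values from an arbitrary $c_0:C\to A$: at each step the TEP property fixes the unique value legalizing the pattern on the current corner, cells are never revisited, and at the end \emph{every} corner of $T_n$ carries a legal pattern, so the resulting colouring $c$ of $T_n$ is valid and extends $c_0$. Uniqueness follows by running along the same filling order: if $c'$ is any valid colouring of $T_n$ with $c'|_C=c_0$, each newly filled cell is forced by the two already-determined cells of its corner, so $c'=c$ by induction. Hence from any colouring of $C$ the deductions converge to one valid colouring of $T_n$; in particular no cell can be derived with two incompatible values, so $C$ is a basis.

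I expect the converse to be the delicate part. The two points needing care are: (i) that the identity ``number of filling steps $=$ number of corners of $T_n$'', together with $\varphi(C)=T_n$, really does force \emph{every} maximal run to use each corner exactly once, and hence to produce an internally legal colouring; and (ii) promoting confluence of the filling from the level of cell sets, where it is given, to the level of values, which is what ``no conflict'' means --- equivalently, reconciling the argument above with the definition of a conflict via two distinct minimal generating subsets, where one checks that such a pair would contradict $\operatorname{size}(\varphi(C))=|C|$. The direct implication, by contrast, is a short consequence of the structure theorem once one notes that $T_n$ is connected for the touching relation.
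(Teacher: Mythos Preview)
The paper does not actually prove this statement: it is quoted from \cite{SaSc23}, and the only justification offered is the one-sentence heuristic ``if there is a conflict then one point is redundant and there is not enough left to fill all of $T_n$.'' So there is no detailed proof in the paper to match against; you are supplying one.

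Your argument is correct. The forward direction is fine, though it can be shortened: since $C\subset T_n$ and $T_n$ is closed under filling steps (if two cells of a corner lie in $T_n$ then so does the third), one gets $\varphi(C)\subseteq T_n$ directly, and combined with $T_n\subseteq\varphi(C)$ this gives equality without invoking the structure theorem. For the converse, your counting argument---$|T_n|-|C|=\tfrac{n(n-1)}{2}$ steps, $\tfrac{n(n-1)}{2}$ corners contained in $T_n$, each usable at most once, hence each used exactly once---is clean and does the job: every corner becomes legal at the step where it fires and is never touched again, so the final colouring is valid.

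This route differs from the sketch the paper alludes to. The paper's idea is essentially matroidal: a conflict means some point of $C$ lies in the filling of the others, so $\varphi(C)=\varphi(C\setminus\{p\})$ is a union of triangles of total size at most $n-1$ and cannot be $T_n$. That argument leans on the structure theorem (and on turning ``conflict'' into ``redundant point''), whereas yours is a direct combinatorial double count that never needs to talk about minimal generating subsets. Your approach is more self-contained given only the definitions in this paper; the sketched approach is shorter once the machinery of \cite{SaSc23} is available.
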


The idea behind this result is that $n$ is the minimal amount of information needed to fill $T_n$, and if there is a conflict then one point is redundant and there is not enough left to fill all of $T_n$. 

The number of triangle bases for $n$ up to $8$ were computed in \cite{Sa22} and are the following: $1$, $3$, $16$, $122$, $1188$, $13844$, $185448$, $2781348$, $\ldots$, which is the same as the numbers of $3$-permutations avoiding $\ddincr$ and $\pat$ computed by Bonichon and Morel \cite{BoMo22}. 


\paragraph{On the choice of grid.}
\label{rem:grid}
Observe that for the filling process, the directions $(0, 1)$, $(1, 0)$ and $(1, -1)$ play symmetric roles. For this reason, it would be natural to consider configurations and bases on the triangular grid rather than the square grid. 

\begin{defin}
\label{def:grid}
    The \emph{triangular grid} is the lattice obtained by adding the integer lines of direction $(1, -1)$ to the lattice $\Z^2$ and rotating the ordinate axis of $\Z^2$ by 30°. Each cell of $\Z^2$ is then associated to an upward and a downward equilateral triangle in the triangle grid. Cells of $\Z^2$ are naturally in correspondence with upward triangle cells, endowing them with coordinates. Configurations are then sets of upward triangle cells, and downward triangle cells are ignored. 
\end{defin}
Observe that, on the triangular grid, the set $T_n$ becomes an equilateral triangle (see Figure~\ref{fig:grids}), and the symmetries of the filling rules imply that the set of basis of size $n$, $\B_n$, is stable under 120° rotation and mirror.
\begin{figure}[ht]
    \centering
    \includegraphics[page=7, height=2.5cm]{Figures/Fig_basis.pdf}
    \caption{$T_5$ on the square and on the triangular grid, with a same basis in purple.}
    \label{fig:grids}
\end{figure}
    
The square grid is more comfortable when working with points coordinates (which we will do in Section~\ref{sec:gamma}), while the triangular grid is better to understand the symmetries of the objects and to study lozenge tilings (which we will need in Section~\ref{sec:psi}), so we will switch between the two grids depending on the context.

\section{The bijection: from 3-permutations to bases}
\label{sec:gamma}

In this section, we define a mapping that sends $3$-permutations to configurations, which induces a bijection between $\A$ and the set of bases $\B_n$, as we will prove in Section~\ref{sec:proof}. This mapping relies on the inversion sequences of the permutations, which we now define.

\subsection{Inversions}

Let $\sigma \in \S_n$ be a permutation. For $i \in \intval{1,n}$,  we denote by $r_\sigma(i)$ (resp. $\ell_\sigma(i)$) the number of $i < j \leqslant n$ (resp. $1 \leqslant j <i$) such that $\sigma(i)>\sigma(j)$ (resp. $\sigma(i)<\sigma(j)$). These are called the number of \emph{right} (resp. \emph{left}) \emph{inversions} of $\sigma$ at $i$ and the sequence $(r_\sigma(i))_{1 \leqslant i \leqslant n}$ (resp. $(l_\sigma(i))_{1 \leqslant i \leqslant n}$) the \emph{right} (resp. \emph{left}) \emph{inversion sequence} of $\sigma$.

Looking at the diagram of $\sigma$, for each $i$ the number $r_\sigma(i)$ is the number of points to the bottom right of $(i, \sigma(i))$ and $\ell_\sigma(i)$ is the number of points to the top left of it (see Figure~\ref{fig:inv num}). We denote respectively by $R_\sigma(i)$ and $L_\sigma(i)$ those sets of points. 

\begin{figure}[ht]
    \centering
    \includegraphics[page=1, height = 3cm]{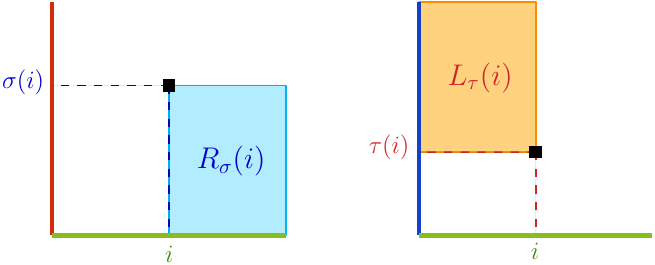}
    \caption{The set $R_\sigma(i)$ (resp. $L_\tau(i)$) is the set of points in the blue (resp. orange) area.}
    \label{fig:inv num}
\end{figure}

\begin{prop}
    The knowledge of $\sigma$ is equivalent to the knowledge of one of its inversion sequence.
\end{prop}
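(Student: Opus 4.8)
The plan is to show that from either inversion sequence one can reconstruct $\sigma$ uniquely, and conversely $\sigma$ obviously determines both sequences. I will work with the right inversion sequence $(r_\sigma(i))_{1\le i\le n}$; the argument for the left sequence is symmetric (or follows by applying the right-sequence case to the reverse-complement of $\sigma$). First I would record the elementary bounds $0 \le r_\sigma(i) \le n-i$, since $r_\sigma(i)$ counts a subset of the indices $i<j\le n$. The key structural observation is a \emph{local} one: $r_\sigma(n) = 0$ always, and more usefully, reading the diagram by columns from right to left, the value $r_\sigma(i)$ tells us exactly where $\sigma(i)$ sits relative to the values $\sigma(i+1),\ldots,\sigma(n)$ already placed — namely, among those $n-i$ already-placed values together with the slot for $\sigma(i)$, the value $\sigma(i)$ is the $(r_\sigma(i)+1)$-st smallest.

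The main step is therefore an explicit right-to-left reconstruction. I would process $i = n, n-1, \ldots, 1$, maintaining the set $V_i = \{\sigma(i), \sigma(i+1), \ldots, \sigma(n)\}$ of values used so far. At step $i$ we know $|V_{i+1}| = n-i$ values have been placed; the value $\sigma(i)$ must be one of the remaining $i$ values $\intval{1,n}\setminus V_{i+1}$, and $r_\sigma(i)$ counts how many elements of $V_{i+1}$ are less than $\sigma(i)$. Since $r_\sigma(i)$ determines the rank of $\sigma(i)$ within $V_{i+1}\cup\{\sigma(i)\}$, and the elements of $V_{i+1}$ are fixed, there is exactly one choice of $\sigma(i)$ among the unused values realizing that rank: concretely, $\sigma(i)$ is the smallest unused value $v$ such that $|\{w \in V_{i+1} : w < v\}| = r_\sigma(i)$, and one checks this $v$ exists and is unique precisely because $0\le r_\sigma(i)\le n-i$. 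This shows the map $\sigma \mapsto (r_\sigma(i))_i$ is injective, and since the reconstruction never fails for a sequence arising from an actual permutation, it is a bijection onto its image (the set of sequences with $0\le r_\sigma(i)\le n-i$, though we do not strictly need to identify the image).

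The part needing the most care is verifying that the candidate value produced at each step is genuinely unused and that the rank condition has a solution — i.e. that the greedy choice is well-defined — which amounts to the combinatorial fact that inserting a new element of prescribed rank into a linearly ordered set of size $n-i$ drawn from $\intval{1,n}$, with the complement of size $i$ available, always admits exactly one consistent value. This is a short counting argument but is the crux; everything else (that the reconstructed sequence $(r_\sigma(i))_i$ agrees with the original, that $\sigma$ is a permutation) is bookkeeping. For the left inversion sequence I would simply note that $l_\sigma(i)$ is the number of left inversions, and that the reverse-complement operation $\sigma \mapsto \tilde\sigma$ with $\tilde\sigma(i) = n+1-\sigma(n+1-i)$ sends left inversions to right inversions, so the left case reduces to the right case already proved.
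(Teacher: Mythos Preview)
Your reconstruction runs in the wrong direction, and this is a genuine gap rather than a cosmetic slip. Processing $i=n,n-1,\ldots,1$ from the \emph{right} inversion sequence cannot work: already at the first step $r_\sigma(n)=0$ carries no information whatsoever about $\sigma(n)$, and in general knowing that exactly $r_\sigma(i)$ elements of $V_{i+1}$ lie below $\sigma(i)$ does not pin down $\sigma(i)$ among the unused values. For a concrete failure take $n=3$ and $\sigma=213$, so $r_\sigma=(1,0,0)$; at step $i=3$ every unused value $1,2,3$ has zero elements of $V_4=\varnothing$ below it, and your ``smallest unused'' rule outputs $\sigma(3)=1$ instead of $3$. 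The sentence ``there is exactly one choice of $\sigma(i)$ among the unused values realizing that rank'' is simply false in this direction.

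The fix --- and this is what the paper does --- is to go left to right. At $i=1$ \emph{all} other positions lie to the right, so $r_\sigma(1)$ counts how many of the remaining $n-1$ values are below $\sigma(1)$, giving $\sigma(1)=r_\sigma(1)+1$ outright; deleting column $1$ (and standardizing) yields a permutation $\sigma'$ with $r_{\sigma'}(i)=r_\sigma(i+1)$, and one recurses. Equivalently, your right-to-left scan is correct for the \emph{left} inversion sequence, since $l_\sigma(n)$ does determine the rank of $\sigma(n)$ among all values. Your reverse-complement symmetry remark is fine, but you have the pairing of sequence and direction swapped.
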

\begin{proof}
    Observe that $\sigma(1) = r_\sigma(1) + 1$ and if $\sigma'$ is the permutation obtained after deleting $1$, then $r_{\sigma'} (i) = r_\sigma(i+1)$ for all $1 \leqslant i < n$. This allows to recover all of $\sigma$ by inserting the points in increasing order.
\end{proof}

\subsection{The function \texorpdfstring{$\Gamma$}{Gamma}: from 3-permutations to configurations}

Let $\Gamma$ be the function that maps a $3$-permutation $\sigtau \in \S_n^2$ to the set of the points $\Gamma(\sigtau) \coloneqq \{(r_\sigma(i), l_\tau(i)) \mid  1 \leqslant i \leqslant n\}\subset T_n$. When there is no ambiguity, we write $x_i = \x(i)$, $y_i = \y(i)$ and denote by $p_i$ the point $(x_i, y_i) \in \Gamma(\sigtau)$. 

\begin{example}
    Consider the $3$-permutation (\blue{254361}, \red{624315}). Its inversion sequences are (\blue{132110}, \red{011241}) and its image through $\Gamma$ is depicted in Figure~\ref{fig:Gamma ex}.
\begin{figure}[ht]
    \centering
    \includegraphics[page=2, height = 3cm]{Figures/Fig_Gamma.pdf}
    \caption{The diagram of the $3$-permutation (\blue{254361}, \red{624315}) and its image through $\Gamma$. Labels are not part of the image, they only indicate which integer is used to compute the coordinates of the point.}
    \label{fig:Gamma ex}
\end{figure}
\end{example}

\begin{prop}
\label{prop:image size}
    For any $\sigtau \in Av_n(\ddincr)$, the image $\Gamma(\sigtau)$ is a configuration of size $n$.
\end{prop}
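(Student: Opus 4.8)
The goal is to show that for $\sigtau \in Av_n(\ddincr)$, the image $\Gamma\sigtau = \{(r_\sigma(i), l_\tau(i)) : 1 \le i \le n\}$ is a configuration of size $n$, meaning it consists of exactly $n$ distinct points lying inside $T_n = \{(x,y) : x,y \in \N,\ x+y < n\}$. There are two things to check: that the points land in $T_n$, and that the $n$ points are pairwise distinct.

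First I would handle containment in $T_n$. For each $i$, the right inversions $r_\sigma(i)$ count indices $j$ with $i < j \le n$, so $r_\sigma(i) \le n - i$; similarly the left inversions $l_\tau(i)$ count indices $j$ with $1 \le j < i$, so $l_\tau(i) \le i - 1$. Adding these gives $r_\sigma(i) + l_\tau(i) \le (n-i) + (i-1) = n - 1 < n$, and both coordinates are nonnegative integers, so $(r_\sigma(i), l_\tau(i)) \in T_n$.

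Next I would prove distinctness, which is where the avoidance hypothesis enters. Fix $1 \le i < j \le n$. If $\sigma(i) > \sigma(j)$, then $(i,j)$ is a right inversion of $\sigma$, and moreover every right inversion $(j,k)$ of $\sigma$ (with $k > j$) yields a right inversion $(i,k)$ of $\sigma$; hence $r_\sigma(i) \ge r_\sigma(j) + 1 > r_\sigma(j)$, so the $x$-coordinates differ. If instead $\sigma(i) < \sigma(j)$, then since $\sigtau$ avoids $\ddincr = (\blue{12},\red{12})$ and the pair $(i,j)$ already realizes the $\blue{12}$ on the $\sigma$-component, we cannot also have $\tau(i) < \tau(j)$; thus $\tau(i) > \tau(j)$. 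Then $(i,j)$ is a left inversion of $\tau$ at $j$, and every left inversion $(k,i)$ of $\tau$ (with $k < i$) gives a left inversion $(k,j)$ of $\tau$; hence $l_\tau(j) \ge l_\tau(i) + 1 > l_\tau(i)$, so the $y$-coordinates differ. In either case $p_i \ne p_j$.

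The main (and only) obstacle is the casework in the distinctness argument: one must correctly verify the monotonicity of inversion counts under the two scenarios and correctly invoke the avoidance of $\ddincr$ to rule out the remaining case. Everything else is a routine bound. Combining the two parts, $\Gamma\sigtau$ is a set of $n$ distinct points contained in $T_n$, i.e. a configuration of size $n$.
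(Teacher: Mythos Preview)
Your argument is correct and follows exactly the same approach as the paper: first bounding $r_\sigma(i)\le n-i$ and $l_\tau(i)\le i-1$ to get containment in $T_n$, then using the dichotomy $\sigma(i)>\sigma(j)$ or (by avoidance of $\ddincr$) $\tau(i)>\tau(j)$ together with the inclusion of inversion sets to force $r_\sigma(i)>r_\sigma(j)$ or $l_\tau(j)>l_\tau(i)$. Your write-up is in fact a bit more explicit than the paper's, which asserts the inequalities on the inversion counts without spelling out the inclusion argument.
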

\begin{proof}
    Observe that for any $3$-permutation $\sigtau \in \S_n^2$, for each $i \in \intval{1,n}$, we have $\x(i) \leqslant n-i$ and $\y(i) < i$ so $\Gamma(\sigtau) \subset T_n$. Moreover, if $\sigtau \in \S_n^2$ avoids $\ddincr$ then for all $i<j$ either $\sigma(i) > \sigma(j)$ and so $\x(i) > \x(j)$ or $\tau(i) > \tau(j)$ and so $\y(i) < \y(j)$, hence all $p_i$'s are distinct.
\end{proof}

\begin{rem}
    Note that $\Gamma$ is not injective on $Av_n(\ddincr)$ for $n >3$: for instance the $3$-permutations $\colpair{4132}{2431}$ and $\colpair{4213}{3241}$ have the same image and both avoid $\ddincr$.

    One can define a variant of $\Gamma$, denote it by $\widetilde{\Gamma}$, that sends a $3$-permutation avoiding $\ddincr$ to a \emph{labeled} configuration by labeling each point with the integer used to compute its coordinates. Observe that $\widetilde{\Gamma}$ is easily invertible: given the labels of the points, we can recover the inversion sequence and so the permutation. However, our triangle bases are not labeled so we have to forget this labeling and find a way to recover it (the condition ``if $i < j$ then either $\x(i) > \x (j)$ or $\y(i) < \y(j)$'' only gives a partial order). This will be the object of Section~\ref{sec:psi}. Still, the labeled variant might be useful for some generalizations (see Section~\ref{sub:other classes}).
\end{rem}

\begin{rem}
    Observe that $\Gamma$ carries some symmetries of both objects: rotating the diagram of $\sigtau$ around the axis $x+y+z$ (i.e. applying $\rho: \sigtau \mapsto (\red{\tau^{-1}}, \green{\sigma \circ \tau^{-1}})$, which stabilizes $\A$) rotates the image by 120° and, denoting $\desc \coloneqq n(n-1) \ldots 21$, applying the mirror $\mu:\sigtau \mapsto (\red{\desc \circ \tau \circ \desc}, \blue{\desc \circ \sigma \circ \desc})$ exchanges the abscissa and ordinates axes (see Figure~\ref{fig:sym}).
    \begin{figure}[ht]
    \centering
    \includegraphics[page=6, width =\textwidth]{Figures/Fig_Gamma.pdf}
    \caption{The function $\Gamma$ transports rotational and mirror symmetries.}
    \label{fig:sym}
    \end{figure}
    
    It may seem that our construction $\Gamma$ breaks the rotational symmetry by forgetting one of the three directions, but this is not the case. Observe that a point $(x, y) \in T_n$ naturally has a third redundant coordinate: it is at distance $x$ from the vertical side of $T_n$, $y$ from the horizontal side and $z=n-1-x-y$ from the diagonal side. These $3$ coordinates are exchanged by the 120° rotation, and $\Gamma$ obtains them by partitioning the set of $n-1$ points in the diagram of the $3$-permutation. Indeed, consider $\sigtau \in Av_n(\ddincr)$, $i \in \intval{1,n}$ and the three hyperplanes that go through the point $i$ in the diagram of $\sigtau$ which cut the cube into $8$ octants. The $(1,1,1)$ and $(n,n,n)$ octants are empty as $\sigtau$ avoids $\ddincr$, and then $R_\sigma(i)$ is the octants $(n, 1, 1)$ and $(n, 1, n)$, and $L_\tau(i)$ the octants $(1, 1, n)$ and $(1, n, n)$. This partitions the cube into three boxes, which are exchanged by a rotation of the diagram and the coordinates of the $p_i$ is $\Gamma(\sigtau)$ are the number of points these boxes contain (see Figure~\ref{fig:box}).
    \begin{figure}[ht]
        \centering
        \includegraphics[page=7, height=4cm]{Figures/Fig_Gamma.pdf}
        \caption{How $\Gamma$ partitions the cube to obtain the three coordinates of a point.}
        \label{fig:box}
    \end{figure}
\end{rem}

At this point, it is not clear that the image of a $3$-permutation avoiding $\ddincr$ and $\pat$ is indeed a triangle basis. Intuitively, avoiding $\ddincr$ will ensure that the points are ``not too close'' (independence) and avoiding $\pat$ that they are ``not too far'' (filling), see Figure~\ref{fig:pat inv}. In the next subsection, we formalize the first notion and prove that the image of a $3$-permutation avoiding $\ddincr$ does satisfy this condition. The proof that $\Gamma(\A)\subseteq \B_n$ will be delayed to Section~\ref{sec:proof}, as it will be easier with the inverse bijection.
\begin{figure}[ht]
    \centering
    \includegraphics[page=3, height = 2cm]{Figures/Fig_Gamma.pdf}
    \caption{The positions forbidden by the patterns $\ddincr$ (left) and $\pat$ (right).}
    \label{fig:pat inv}
\end{figure}

\subsection{Sparsity}

In this subsection, we provide a necessary condition for independence and prove that if a $3$-permutation avoids $\ddincr$ then its image through $\Gamma$ satisfies this condition.

For integers $a, b$ and $k$, we denote by $(a,b) + T_k$ the triangle $\{(x, y) \mid a \leqslant x, b \leqslant y$ and ${x + y < a+b+k}\}$.  A configuration $C$ of size $n$ is \emph{sparse} if for all $1 \leqslant k < n$, there is no triangle $T = (a,b) + T_k \subset T_n$ such that $|C \cap T| > k$.

\begin{prop}[{\cite[Lemma 3]{SaSc23}}]
    All independent sets (and so all bases) are sparse.
\end{prop}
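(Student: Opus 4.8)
The plan is to argue by contraposition, using the filling-structure theorem (the result of \cite{SaSc23} quoted above stating that $\varphi(P)$ is always a union of non-touching triangles whose sizes sum to at most $|P|$). Suppose $C$ is a configuration of size $n$ that is \emph{not} sparse; I want to produce a conflict, which by definition witnesses that $C$ is not independent. By the failure of sparsity, there are $k$ with $1 \leqslant k < n$ and a translated triangle $T = (a,b) + T_k$ with $|C \cap T| \geqslant k+1$. The key observation is that the filling process is \emph{local} in the sense that it never leaves a triangle once it is confined to one: if a set of known cells is contained in $T$, then every filling step stays inside $T$ (a filling step at position $(x,y)$ adds the missing corner of $\{(x,y),(x{+}1,y),(x,y{+}1)\}$, and if two of these three cells lie in the triangle $T$, so does the third, because the triangle is downward-and-leftward closed under the constraint $x+y < a+b+k$). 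Hence $\varphi(C \cap T) \subseteq T$, so $\varphi(C\cap T)$ is a union of non-touching triangles sitting inside $T$ whose sizes sum to at most $|C\cap T|$. But $|C \cap T| \geqslant k+1 > k = $ (the side length of $T$), and a union of \emph{non-touching} sub-triangles of $T_k$ cannot have total size exceeding $k$ — in fact a single triangle already has this much, and any second non-touching triangle forces the total strictly below $k$. So here is where the pigeonhole bites: with $k+1$ points inside $T$ generating a filling that fits inside $T$ with total triangle-size $\le k$, at least two of the $k+1$ points must be "wasted," i.e. the minimal generating sets do not behave freely.

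More precisely, I would run the filling on $C \cap T$ and track minimal subsets. Since $|C \cap T| = m \geqslant k+1$ but $\varphi(C\cap T)$ has size at most $k < m$, the map from points of $C \cap T$ into "new" filled cells cannot be injective in the relevant bookkeeping sense; concretely, there must exist a cell $x$ and two distinct minimal subsets $X, Y \subseteq C \cap T \subseteq C$ each having $x$ in its filling — otherwise each point of $C\cap T$ would contribute at least one cell not contributed by the others, forcing $|\varphi(C\cap T)| \geqslant |C\cap T| = m > k$, contradicting the size bound from the structure theorem. That pair $(X,Y)$ is exactly a conflict of $C$ at $x$, so $C$ is not independent. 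This proves the contrapositive, hence every independent set (and in particular every basis, since bases are by definition independent and filling) is sparse.

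The step I expect to be the main obstacle is making the counting in the previous paragraph rigorous: passing from "the filling fits inside $T_k$ and has total triangle-size $\le k$" to "two distinct minimal generating subsets share a filled cell." The clean way is to phrase it via a matroid/independence argument already implicit in \cite{SaSc23}: if $C\cap T$ were conflict-free, then iteratively removing a point that is \emph{not} in the filling of the others would strictly decrease $|\varphi|$ by exactly one each time (this is the "each point carries its own new cell" principle), so $|\varphi(C\cap T)| \geqslant |C \cap T|$; combined with $|\varphi(C\cap T)| \le |C\cap T|$ from the structure theorem this gives equality, and then $|\varphi(C\cap T)| = m \ge k+1$, contradicting $\varphi(C\cap T) \subseteq T_k$-translate which has only $\binom{k+1}{2} \ge$ enough room but whose \emph{union-of-non-touching-triangles} description caps total \emph{triangle side-length sum}, hence total cell count, below the relevant threshold — I need to be careful that the structure theorem bounds the sum of \emph{sizes} ($=$ side lengths) of the triangles, and relate that back to cell counts, so the final inequality chain should be stated in terms of side lengths throughout: $\sum (\text{sizes}) \le |C \cap T| $ is false-direction; rather $\varphi$ being contained in a size-$k$ triangle forces its triangle-decomposition to have sizes summing to at most $k$, while conflict-freeness forces the sizes to sum to at least $|C\cap T| \ge k+1$. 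I would double-check which quantity \cite{SaSc23}'s "sizes sum to at most $|P|$" refers to and align the argument accordingly; modulo that bookkeeping, the contraposition is straightforward.
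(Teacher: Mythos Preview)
The paper does not supply its own proof of this proposition: it is imported from \cite{SaSc23} and accompanied only by the one-line intuition that ``if a configuration is not sparse then we have too much information in one area and it creates a default of independence.'' There is therefore nothing in the present paper to compare your attempt against.

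On the substance of your attempt: the contraposition and the locality observation $\varphi(C\cap T)\subseteq T$ are both correct and are the right starting point. But, as you yourself flag, the argument does not close with the tools available in this paper. To finish your route you need (i) that for an \emph{independent} $P$ the triangle decomposition of $\varphi(P)$ has side lengths summing to \emph{exactly} $|P|$ --- the structure theorem quoted here only gives ``at most $|P|$'' --- and (ii) that non-touching sub-triangles of a size-$k$ triangle have side lengths summing to at most $k$. Neither is proved in this paper, and your paragraph trying to extract (i) from a ``each point carries its own new cell'' principle is not an argument yet; it is precisely the statement to be proved. A shorter route that sidesteps this bookkeeping is a direct counting argument: over an alphabet $A$ with $|A|\ge 2$, a size-$k$ triangle $T$ admits exactly $|A|^k$ valid partial colorings (its bottom row is a basis of size $k$). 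If $C$ were independent then so would be $C\cap T$, and each of the $|A|^{|C\cap T|}\ge |A|^{k+1}$ free colorings of $C\cap T$ would extend to a valid tiling of $\Z^2$ and hence restrict to a valid coloring of $T$; since $C\cap T\subseteq T$ this restriction is injective, giving $|A|^{k+1}\le |A|^k$, a contradiction.
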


Intuitively, if a configuration is not sparse then we have too much information in one area and it creates a conflict. The interested reader is referred to \cite{SolitIndep[SaSc25]} for discussions on the subtle difference between sparsity and independence. 

\begin{thm}
\label{thm:sparse}
    If $\sigtau \in \S_n^2$ avoids $\ddincr$ then $\Gamma(\sigtau)$ is sparse.
\end{thm}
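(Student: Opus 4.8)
I want to show that if $\sigtau$ avoids $\ddincr$, then for every $k$ with $1 \leqslant k < n$ and every triangle $T = (a,b)+T_k$, we have $|\Gamma\sigtau \cap T| \leqslant k$. Recall a point $p_i = (x_i, y_i)$ lies in $T$ iff $x_i \geqslant a$, $y_i \geqslant b$, and $x_i + y_i < a+b+k$. The plan is to fix such a triangle, let $I = \{ i : p_i \in T\}$ be the set of indices landing inside it, and produce from $I$ a structure in the permutations that forces $|I| \leqslant k$ — essentially a chain/antichain argument using the combinatorial meaning of $x_i = \x(i)$ and $y_i = \y(i)$ as counts of inversions.

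**The key combinatorial step.** The heart of the argument should re-use the dichotomy already established: since $\sigtau$ avoids $\ddincr$, for any $i < j$ either $\sigma(i) > \sigma(j)$ (and then $x_i > x_j$) or $\tau(i) > \tau(j)$ (and then $y_i < y_j$). So, ordering the indices of $I$ increasingly as $i_1 < i_2 < \cdots < i_m$, consecutive pairs each either strictly decrease the $x$-coordinate or strictly increase the $y$-coordinate. I would like to argue that along this sequence, the quantity that controls membership — roughly $x_{i_t} + y_{i_t}$ measured against the constraints — is forced to change enough that only $k$ values can fit. More precisely: I expect to show that for consecutive $i_t < i_{t+1}$ in $I$, one has $x_{i_t} + (\text{something}) \geqslant x_{i_{t+1}} + (\text{something else})$, giving a strictly decreasing integer statistic bounded in a range of length $k$. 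A clean candidate: using that $x_{i_t} \geqslant a$ and $y_{i_t} \geqslant b$ and $x_{i_t}+y_{i_t} \leqslant a+b+k-1$, the "slack" $s_t = (a+b+k-1) - x_{i_t} - y_{i_t}$ lies in $[0, k-1]$, and I want the $s_t$ (or a closely related quantity such as $s_t$ combined with $x_{i_t}-a$ or $b - y_{i_t}$, but with the right sign) to be strictly monotonic along $I$, which immediately bounds $m$ by $k$.

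**Why monotonicity should hold.** When $\sigma(i_t) > \sigma(i_{t+1})$: every point counted in $R_\sigma(i_{t+1})$ also lies in $R_\sigma(i_t)$ (if $(i_{t+1},\ell)$ is an inversion with $\ell > i_{t+1}$ then $(i_t,\ell)$ is too), and additionally $(i_t, i_{t+1})$ is an inversion of $\sigma$, so $x_{i_t} \geqslant x_{i_{t+1}} + 1$. Meanwhile I need to control $y_{i_t}$ versus $y_{i_{t+1}}$ — here the $\ddincr$-avoidance does not directly help, so I must instead track how $L_\tau$ changes, and this is where the bookkeeping gets delicate: the naive bound on $y$ can go the wrong way. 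The symmetric case $\tau(i_t) > \tau(i_{t+1})$ gives $y_{i_t} \leqslant y_{i_{t+1}} - 1$. So I expect the correct invariant to be something asymmetric like $x_{i_t} + y_{i_{t+1}}$ along pairs, or better, to run the argument not on consecutive pairs but by partitioning $I$ according to which alternative of the dichotomy holds and handling maximal runs. Concretely, I would define, for a maximal run of indices in $I$ on which the $x$-coordinate strictly decreases at each step, a bound via the $x$-coordinates (which live in $[a, a+k-1]$ along that run, but also drop by at least one each step, forcing length $\leqslant k$), and similarly for runs where $y$ strictly increases; then glue the runs, checking the gluing does not double-count beyond the budget $k$.

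**Main obstacle.** The genuine difficulty is the gluing: the $x$-decreasing runs and $y$-increasing runs interleave, and one must verify that the total count across an alternating sequence of runs is still $\leqslant k$ rather than summing up. I expect this to come down to showing that at a transition from an $x$-run to a $y$-run, the relevant coordinate value is shared or off by exactly one in a way that makes the ranges $[a, a+k-1]$ and $[b, b+k-1]$ "telescope" against the single global constraint $x+y \leqslant a+b+k-1$. I'd likely formalize this by assigning to each $i \in I$ the value $x_i - a + b - y_i \in \Z$ is not quite right either; rather $x_i + y_i$ is the natural monotone-ish quantity under neither alternative, so the final proof probably defines the statistic $\min(x_i - a,\, y_i - b)$ or a lexicographic pair $(x_i, -y_i)$ and shows it is strictly decreasing in lex order along all of $i_1 < \cdots < i_m$ — if $\sigma$ decreases then $x$ drops; if $\tau$ decreases then $x$ weakly changes but if $x$ ties then $y$ must strictly increase so $-y$ drops. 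Then distinctness of the $m$ lex-values, together with their confinement to $\{(x,y) : a \leqslant x,\ b \leqslant y,\ x+y \leqslant a+b+k-1\}$ arranged in a single descending lex-chain whose length in that triangular region is exactly $k$... wait, that region has more than $k$ lattice points, so pure distinctness is not enough — I must additionally use that a lex-descending chain picks at most one point per "anti-diagonal" only if the $x+y$ sum itself is monotone, which it is not. This confirms that reconciling the lex-chain length with the triangular region is the crux; the resolution I anticipate is to prove the stronger statement that along $i_1 < \cdots < i_m$ the sum $x_{i_t} + y_{i_t}$ is non-increasing *and* strictly decreases whenever two consecutive indices have equal $x$ (equivalently, whenever the $\sigma$-alternative fails), which pins each anti-diagonal level to contribute at most its "width" and sums to $\leqslant k$.
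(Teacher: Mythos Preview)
Your proposal does not close. You correctly set up the dichotomy (for $i<j$ either $x_i>x_j$ or $y_i<y_j$) and correctly identify the obstacle: no single scalar statistic is forced to be monotone along $i_1<\cdots<i_m$. In particular your final candidate, ``$x_{i_t}+y_{i_t}$ is non-increasing along $t$'', is false. Take $\sigtau=(\blue{132},\red{321})$, which avoids $\ddincr$; then $p_1=(0,0)$, $p_2=(1,1)$, $p_3=(0,2)$, and the sums $0,2,2$ increase. More generally, when $\sigma(i_t)<\sigma(i_{t+1})$ the $x$-coordinate can jump \emph{up}, so the ``$x$-decreasing runs / $y$-increasing runs'' picture does not telescope against the single constraint $x+y\le a+b+k-1$. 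Your lex-chain idea only recovers distinctness of the $p_i$, which you already knew and which does not bound $|I|$ by $k$.

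The paper's argument avoids the chain analysis entirely by looking at one well-chosen index. Let $m\in I$ maximize $\sigma$ on $I$. Every other $i\in I$ satisfies $\sigma(i)<\sigma(m)$; if $i>m$ this puts $i\in R_\sigma(m)$, and if $i<m$ the $\ddincr$-avoidance forces $\tau(i)>\tau(m)$, so $i\in L_\tau(m)$. Hence $I\setminus\{m\}\subset R_\sigma(m)\cup L_\tau(m)$. Now (in the generic case $m\neq\min I,\max I$) use the extreme indices $i_{\min}=\min I$ and $j_{\max}=\max I$: one has $L_\tau(i_{\min})\subset L_\tau(m)$ and $R_\sigma(j_{\max})\subset R_\sigma(m)$, these two sets are disjoint from $I$ and from each other, and their sizes are $y_{i_{\min}}\ge b$ and $x_{j_{\max}}\ge a$. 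Therefore
\[
x_m+y_m=|R_\sigma(m)|+|L_\tau(m)|\ \ge\ (|I|-1)+a+b\ \ge\ k+a+b,
\]
contradicting $p_m\in T$. The edge cases $m=\min I$ or $m=\max I$ give $x_m\ge a+k$ or $y_m\ge b+k$ directly. The missing idea in your attempt is precisely this extremal choice of a single pivot index, which converts the problem into a containment of inversion sets rather than a chain-length bound.
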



\begin{proof}
    Let $\sigtau \in Av_n(\ddincr)$. Assume that there is a triangle $T = (a,b)+T_k \subset T_n$ of size $k$ such that $|\Gamma(\sigtau) \cap T|>k$. Denote by $I \subset \intval{1, n}$ the set of indices $i$ such that $p_i \in T$. Recall that by definition, this is the set of indices such that $\x(i) \geqslant a$, $\y(i) \geqslant b$ and $\x(i) + \y(i) <a+b+k$. 
    
    Denote $i=\min I$, $j = \max I$ and let $m \in I$ be the point where $\sigma$ reaches its maximum on $I$ (possibly $m=i$ or $m=j$). Observe that, by definition, $L_\tau(i)\cap I = R_\sigma(j) \cap I = \varnothing$ and, since $\sigtau$ avoids $\ddincr$, we have $(R_\sigma(m) \cup L_\tau(m)) \cap I = I\setminus\{m\}$. Therefore we have 
    \vspace{-0.3cm}
    \begin{multline*}
        x_m + y_m = |R_\sigma(m)| + |L_\tau(m)| \geqslant |(R_\sigma(m) \cup L_\tau(m)) \cap I| + |R_\sigma(m)\setminus I| + |L_\tau(m)\setminus I| \\
        \geqslant |I|-1+|R_\sigma(j)|+|L_\tau(i)|\geqslant k+a+b
    \end{multline*}
    which contradicts $p_m\in T$.
\end{proof}

\section{The inverse bijection: from bases to 3-permutations}
\label{sec:psi}

In this section, we build the inverse bijection, $\Psi$, by adapting a construction from discrete geometry. This construction is defined on objects called \emph{fine mixed subdivisions of a simplex}, which are also known as lozenge tilings of the triangle (see for instance \cite{FlagAB07}).

\subsection{Lozenge tilings}

Let us start by defining lozenge tilings and explaining how they are related to triangle bases. 

For the purposes of this section, it will be more comfortable to work on the triangular grid. Recall that on this grid $T_n$ is an equilateral triangle divided into $n(n+1)/2$ upward triangle cells and $n(n-1)/2$ downward triangle cells (see Definition~\ref{def:grid}).

\begin{defin}
    Let $C$ be a configuration of $n$ upward triangles in $T_n$. A \emph{lozenge tiling} of $C$ is a tiling of $T_n \setminus C$ by $n(n-1)/2$ rhombi formed by combining two unit equilateral triangles edge-to-edge (see Figure~\ref{fig:lozenge}). The size of a lozenge tiling is its number of triangles.
\end{defin}

\begin{figure}[ht]
    \centering
    \includegraphics[page=1, height=3.5cm]{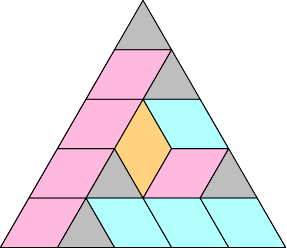}
    \caption{A lozenge tiling of size $5$.}
    \label{fig:lozenge}
\end{figure}

We proceed by associating a lozenge tiling to a basis, and then applying a construction that associates a $3$-permutation to a lozenge tiling. The following result ensures this is possible.

\begin{prop}[{\cite[Theorem 6.2]{FlagAB07}}]
    A triangle configuration admits a lozenge tiling if and only if it is sparse. 
\end{prop}
\begin{proof}
    It is easy to see that sparsity is a necessary condition to admit a lozenge tiling. Indeed, if $T$ is a subtriangle of $T_n$, then all the downward unit triangles of $T$ must be covered by a rhombus that is completely inside $T$. Denoting $k=|T|$, it follows that there must be at least $k(k-1)/2$ rhombi in $T$, so there cannot be more than $k$ triangles in $T$. We refer to \cite{FlagAB07} for a proof of the sufficiency. 
\end{proof}

In fact, for bases, we have the following stronger result.

\begin{lemma}[{\cite[Corollary 3.2, rephrased]{MixedSubdivYY2026}}]
    A configuration admits a unique lozenge tiling if and only if it is a basis.
\end{lemma}
\begin{proof}
    Let $B$ be a triangle basis. Observe that when two cells of $B$ are in position for a filling step, then a lozenge tiling of $B$ must contain the rhombus that completes the triangle of size $2$ containing those two cells (see Figure~\ref{fig:lozenge unique}). Since $B$ fills $T_{|B|}$, this implies that the whole tiling is entirely determined. In fact, this situation is the only one that can force the position of a rhombus. Therefore, if a configuration admits a unique tiling, then it fills, so is a basis. 
    \begin{figure}[ht]
        \centering
        \includegraphics[page=2,height=1.5cm]{Figures/Fig_Psi.pdf}
        \caption{If a filling step can be performed, then a rhombus must be placed there.}
        \label{fig:lozenge unique}
    \end{figure}
\end{proof}

Given a basis $B$, we then denote by $\mathcal{L}_B$ the unique lozenge tiling of $B$.

\subsection{The function \texorpdfstring{$\Psi$}{Psi}: from bases to 3-permutations}

We now describe a construction from discrete geometry which associates a pair of permutations to a lozenge tiling. This construction can be found for instance in \cite{AcyclicSystAC2013}. 

Let $\mathcal{L}$ be a lozenge tiling of size $n$. We build its $3$-permutation, $\widehat{\Psi}(\mathcal{L})$, as follows (see Figure~\ref{fig:ex psi} for an example):
\begin{enumerate}
\itemsep=0pt
    \item Number the segments of the right side of $T_n$ from $1$ to $n$ from bottom to top. 
    \item For each $i \in \intval{1,n}$, starting in the middle of the $i$th segment of the right side of $T_n$, draw a line $\Yline(i)$ as follows:
    \vspace{-0.2cm}
    \begin{itemize}
    \item[-] When the line enters a rhombus, it crosses it transversally. 
        \item[-] When the line enters a triangle, it splits in two and one line exists from each of the two other sides of the triangle. 
        \item[-] When the line reaches a side of $T_n$, it stops and the number $i$ is written on that segment.
    \end{itemize}
    \vspace{-0.15cm}
    \item Denote by $\sigma$ the inverse of the permutation obtained by reading the numbers on the bottom side of $T_n$ from left to right and denote by $\tau$ the inverse of the permutation obtained by reading the numbers on the left side from top to bottom. Return $\widehat{\Psi}(\mathcal{L}) \coloneqq \sigtau$.

    In other words, if a line goes from the $i$th segment of the right side of $T_n$ to the $j$th segment of the bottom side (numbering from left to right), then $\sigma(i) = j$. Similarly if a line goes from the $i$th segment of the right side to the $j$th segment of the left side (numbering from top to bottom) then $\tau(i) = j$.
\end{enumerate}

\begin{figure}[ht]
    \centering
    \includegraphics[page=3, height=5.5cm]{Figures/Fig_Psi.pdf}
    \caption{An example of application of $\Psi$ to a basis of size $6$. The basis $B$ is made of the gray triangles and $\Psi(B) = \colpair{254361}{624315}$.}
    \label{fig:ex psi}
\end{figure}

\begin{rem}
    A lozenge tiling of a triangle configuration is often referred to as a \emph{fine mixed subdivision} of $2$-dimensional simplex (i.e. of the triangle) in discrete geometry. The set of lines appearing in the construction of $\Psibis$ can be referred to in the literature as the \emph{colored dual polyhedral complex} or the \emph{tropical pseudoline arrangement} of the fine mixed subdivision \cite{FlagAB07, AcyclicSystAC2013}.
\end{rem}

We now provide an elementary proof that $\Psibis$ is well defined.

\begin{lemma}
\label{lem:psi well def}
    The construction $\Psibis$ is well defined on any lozenge tiling.
\end{lemma}
\begin{proof}
    Let $\loz$ be a lozenge tiling of size $n$ and consider the lines $\Yline(i)$ drawn when building $\Psibis(\loz)$. Observe that, since lines cross rhombi transversely, a line can only cross left slanted edges until it reaches a unit triangle, and crosses them from right to left. Then, the two lines it splits into can respectively only cross horizontal edges from top to bottom and right slanted edges from right to left. It follows that those two lines can never enter another triangle, and must reach respectively the bottom and the left border of $T_n$. Finally, since the path followed by the lines are reversible, no two lines can reach the same segment on a border. Therefore, the words written on the bottom and left sides of $T_n$ are well defined and are permutations in $\S_n$
\end{proof}

Finally, we define the mapping $\Psi$ on bases as the function that maps a basis $B$ to the $3$-permutation $\widehat{\Psi}(\mathcal{L}_B)$ associated to its unique lozenge tiling $\loz_B$.

\subsection{Acyclicity}

Ardilla and Ceballos characterized the $3$-permutations obtainable as the image of a lozenge tiling. This result is known as the ``2D Acyclic System Theorem''.

\begin{thm}[\cite{AcyclicSystAC2013}, Theorem 4.2, rephrased]
\label{thm:image psi}
    The set of $3$-permutations obtainable as the image of a lozenge tiling through $\Psibis$ is exactly $Av(\ddincr)$.
\end{thm}
\begin{proof}
    We reprove that the image of a lozenge tiling is a $3$-permutation avoiding $\ddincr$ as it provides insight on how the lozenge tiling and the $3$-permutation are related, which will be useful later. We refer to \cite{AcyclicSystAC2013} for a proof of surjectivity. 
    
    Let $\mathcal{L}$ be a lozenge tiling of size $n$ and denote by $\sigtau$ its image through $\Psibis$. For $i \in \intval{1,n}$, we consider the set of $3$ lines $\Yline(i)$ from the construction of $\Psibis(\mathcal{L})$, which go from a unit triangle, $t_i$, to a border of $T_n$. We call respectively right, left and downward lines $i$ the lines exiting the triangle $t_i$ to its right, left or bottom side and denote them respectively $\rline_i$, $\lline_i$ and $\dline_i$. In Figure~\ref{fig:psi_av_1212}, the right lines are green, the left lines are blue and the downward lines are red.

    Observe that for any fixed $i\neq j \in \intval{1,n}$, $\Yline(i)$ and $\Yline(j)$ must intersect at least once for all lines to reach their border. Moreover, there is exactly one intersection of two lines in each rhombus of the tiling, and there are $n(n-1)/2$ rhombi which is exactly the number of pairs of indices. So for any $i\neq j \in \intval{1,n}$, $\Yline(i)$ and $\Yline(j)$ intersect exactly once. The result then follows from the fact that a $\ddincr$ patterns requires at least two crossings.  
    
    Let us be more precise. Fix $i < j \in \intval{1, n}$. As observed in the proof of Lemma~\ref{lem:psi well def}, a line can only cross edges that have a fixed direction among the three possible directions. It follows that $\Yline(i)$ and $\Yline(j)$ can only cross in one of the three ways depicted in Figure~\ref{fig:psi_av_1212}.

    \begin{figure}[ht]
        \centering
        \includegraphics[page=4, width=0.9\textwidth]{Figures/Fig_Psi.pdf}
        \caption{The 3 kinds of crossings. \\ 
        Left: Right-down crossing. Then $(i, j)$ is an inversion for $\tau$ only. \\
        Middle: Left-right crossing. Then $(i, j)$ is an inversion for $\sigma$ only. \\
        Right: Down-left crossing. Then $(i, j)$ is an inversion for both $\sigma$ and $\tau$.}
        \label{fig:psi_av_1212}
    \end{figure}

    Assume that the right line $\rline_i$ crosses the downward line $\dline_j$ (Figure~\ref{fig:psi_av_1212}, left). Then the downward line $\dline_i$ is always to the left of the downward line $\dline_j$, which means $\sigma(i) < \sigma(j)$; and the left and right lines $\lline_i$ and $\rline_i$ are always below the left and right lines $\lline_j$ and $\rline_j$, which means $\tau(i) > \tau(j)$. So $(i, j)$ is an inversion for $\tau$ but not for $\sigma$. Similarly, if the left line $\lline_i$ crosses the right line $\rline_j$ (Figure~\ref{fig:psi_av_1212}, middle), then $\sigma(i) > \sigma(j)$ and $\tau(i) < \tau(j)$ so $(i, j)$ is an inversion for $\sigma$ but not for $\tau$. Finally, assume that the left line $\lline_i$ crosses the downward line $\dline_j$ (Figure~\ref{fig:psi_av_1212}, right). Then the downward line $\dline_i$ is always to the right of the downward line $\dline_j$ so $\sigma(i) > \sigma(j)$, and the left line $\lline_i$ is always below the left line $\lline_j$ so $\tau(i) > \tau(j)$. Therefore $(i,j)$ is an inversion for both $\sigma$ and $\tau$. In all cases, $(i, j)$ is not an occurrence of $\ddincr$, so, as $i$ and $j$ are arbitrary, $\Psibis(\loz)$ avoids $\ddincr$.
\end{proof}

\begin{rem}
    Observe that $\Psibis$ is not injective. For example, the $3$-permutation $\colpair{312}{231}$ has two preimages (see Figure~\ref{fig:psi_not_injective}).
    \begin{figure}[ht]
        \centering
        \includegraphics[page=5, height=2.7cm]{Figures/Fig_Psi.pdf}
        \caption{The $3$-permutation $\colpair{312}{231}$ has two preimages through $\Psibis$.}
        \label{fig:psi_not_injective}
    \end{figure}
\end{rem}

\section{The mappings \texorpdfstring{$\Gamma$}{Gamma} and \texorpdfstring{$\Psi$}{Psi} are inverse bijections}
\label{sec:proof}

In this section, we prove that $\Gamma$ and $\Psi$ are bijections between $\A$ and the set $\B_n$ of triangle bases of size $n$ for all $n$, and that they are inverse of each other. 

\begin{thm}
\label{thm:gamma and psi}
    The mappings $\Gamma$ and $\Psi$ are inverse of each other and induce a bijection between the set of triangle bases of size $n$ and $\A$ for all $n \geqslant 1$. 
\end{thm}

\subsection{\texorpdfstring{$\Gamma$}{Gamma} is the inverse of \texorpdfstring{$\Psi$}{Psi}}

The goal of this subsection is to prove that $\Gamma\circ\Psi$ is the identity on bases. In fact, we prove the stronger result that if a $3$-permutation $\sigtau$ is the image of a lozenge tiling $\mathcal{L}$, then the positions of the triangles in $\mathcal{L}$ are given by $\Gamma(\sigtau)$. Ardila and Ceballos describe another construction that allows to recover the positions of the triangles from the $3$-permutation, relying on a factorization of the permutations into cycles of a particular form \cite[Lemma 4.7]{AcyclicSystAC2013}. It can be proven that their construction yields the same coordinates as $\Gamma$, but we prefer proving directly that $\Gamma$ recovers the coordinates of the triangles as it is both simpler and more enlightening. 

\begin{thm}
\label{thm:psi coord}
    For any lozenge tiling $\mathcal{L}$, the coordinates of the triangles of $\mathcal{L}$ are given by $\Gamma(\Psibis(\mathcal{L}))$. 
\end{thm}
\begin{proof}
    Let $\mathcal{L}$ be a lozenge tiling of size $n$ and denote by $\sigtau$ its image through $\Psibis$. Fix $i \in \intval{1, n}$ and consider, with the same notations as in the proof of Theorem~\ref{thm:image psi}, the set of lines $\Yline(i) = \{\rline_i, \lline_i, \dline_i\}$ and the triangle $t_i$ it crosses. Let $(x_i, y_i)$ be the coordinates of the triangle $t_i$. Then observe that the abscissa $x_i$ is exactly the number of rhombi the left line $\lline_i$ crosses, as each time the line $\lline_i$ crosses a rhombus its abscissa decreases by one. According to the proof of Theorem~\ref{thm:image psi}, there is a correspondence between those rhombi and the right inversion set of $i$ for $\sigma$. Therefore we do have $x_i = \x(i)$. A symmetric argument gives that the ordinate of $t_i$ is $y_i=\y(i)$. 
\end{proof}

It remains to prove that the image of bases through $\Psi$ is exactly $\A$. To do so, we rely on the notion of cycle flips introduced by Yao and Yudin \cite{MixedSubdivYY2026}.

\subsection{Cycle-flips}

In \cite{MixedSubdivYY2026}, Yao and Yudin observed that pairs of lozenge tilings on the same triangle configuration are related by so called cycle-flips. In this subsection, we recall this notion which will help us in proving that the image of bases is exactly $\A$.   

\begin{defin}
    Recall that a rhombus can be seen as the union of an upward and a downward unit equilateral triangles, and therefore covers an upward and a downward triangle cell of the triangular grid in a lozenge tiling. A \emph{cycle} in a lozenge tiling is a sequence $\mathrm{r}_1, \ldots, \mathrm{r}_k$ of rhombi such that for each $1\leqslant i\leqslant k$, the upward triangle of $\mathrm{r}_i$ is adjacent to the downward triangle of $\mathrm{r}_{i+1}$ (with $\mathrm{r}_{k+1}=\mathrm{r}_1$). Observe that the portion of the grid covered by a cycle is an alternation of upward and downward triangle cells, which can be tiled in two ways (corresponding to the two ways to match adjacent triangles to form rhombi). 
    A \emph{cycle flip} consists in replacing one of the two tilings of a cycle by the other (see Figure~\ref{fig:cycle-flip}).
    \begin{figure}[ht]
        \centering
        \includegraphics[page=2, height=4cm]{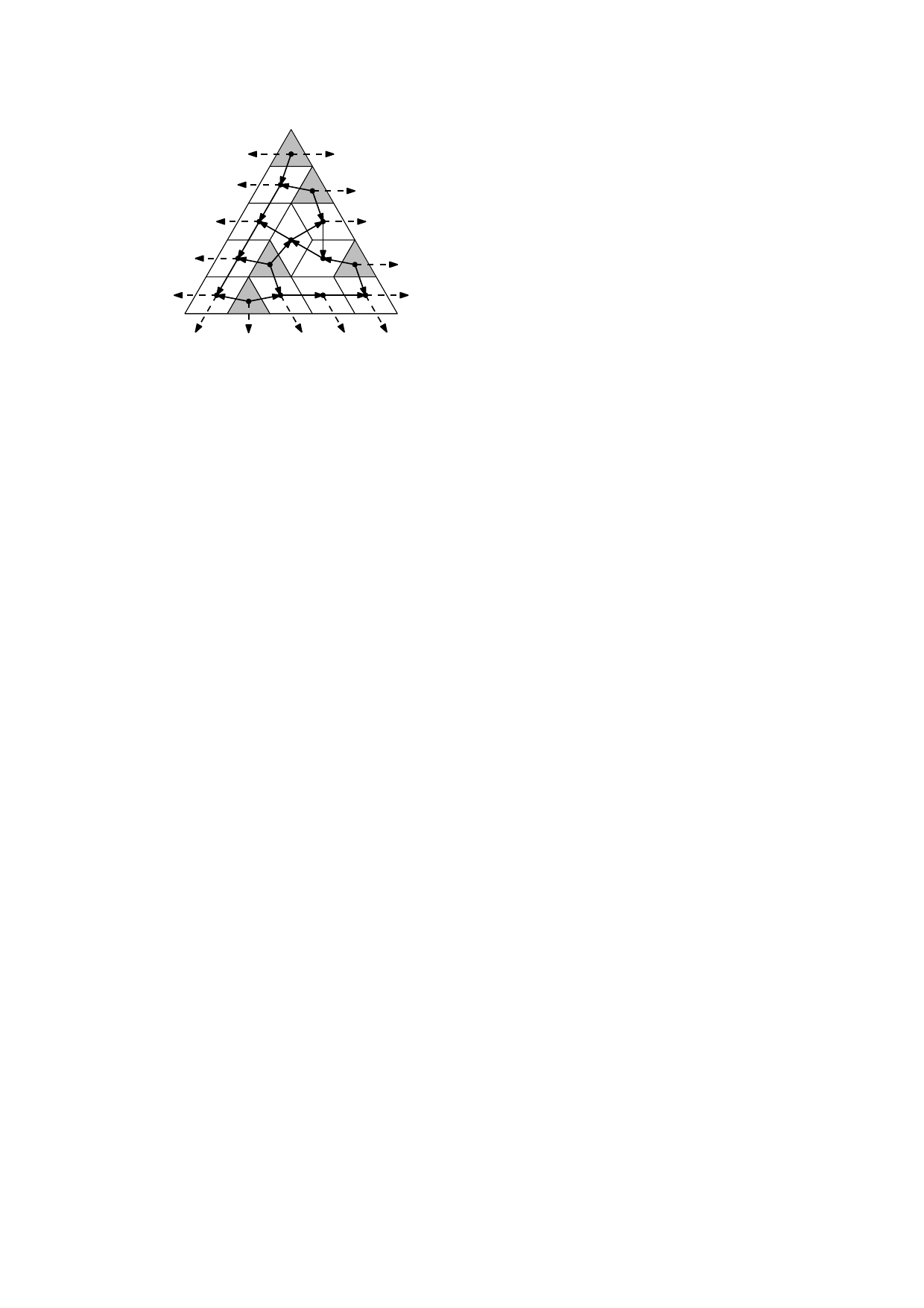}
        \caption{An example of a cycle flip.}
        \label{fig:cycle-flip}
    \end{figure}
\end{defin}

Note that if a lozenge tiling contains a cycle, then flipping this cycle yields a different lozenge tiling on the same triangle configuration. In particular, this configuration cannot be a basis. This will be the key to proving that the image of a basis through $\Psi$ avoids $\pat$. However, this will not suffice to prove the equivalence as it is hard to understand what does the presence of a cycle in a lozenge tilings implies for its image through $\Psibis$ in terms of patterns occurrences (see Section~\ref{sub:GD-flips} for further discussions on this matter). Fortunately, Yao and Yudin proved that we only need to consider cycles of the following simple shape: 

\begin{defin}
    Fix $m \geqslant 0$ and consider the region obtained by removing the three corner unit triangles of $T_{m+3}$, as well as the triangle of size $m$ in the middle. There are exactly two ways to tile this region with rhombi, as illustrated in Figure~\ref{fig:GD-flip}, 
    both leading to a cycle. Such a cycle is called a \emph{GD-cycle}. A \emph{GD-flip} consists in flipping a GD-cycle, i.e. replacing one of the two ways of tiling the region by the other. The size of a GD-cycle or GD-flip is the size of its middle triangle.
    \begin{figure}[ht]
        \centering
        \includegraphics[page=3, height=3.2cm]{Figures/Fig_proof.pdf}
        \caption{A GD-cycle of size $3$. A GD-flip consists in replacing one pattern with the other.}
        \label{fig:GD-flip}
    \end{figure}
\end{defin}

\begin{thm}[{\cite[Theorem 5.2]{MixedSubdivYY2026}}]
\label{thm:GD-flips}
    Any two lozenge tilings with the same configuration of triangles are connected by a sequence of GD-flips.
\end{thm}

\begin{corol}
\label{cor:basis GD}
    A lozenge tiling has no GD-flip if and only if its triangle configuration is a basis.
\end{corol}

\subsection{The image of bases is exactly \texorpdfstring{$\A$}{Av((12,12),(312,231))}}

Let us now prove that the image through $\Psi$ of the set of triangle bases $\B_n$ is exactly $\A$.

\begin{lemma}
\label{lem:image psi}
    Let $C$ be a configuration and let $\loz$ be a lozenge tiling of $C$. Then $\Psibis(\loz)$ avoids $\pat$ if and only if $C$ is a basis.
\end{lemma}
\begin{proof}
    Let us prove the contraposition. First consider a configuration $C$ that is not a basis, and a lozenge tiling $\loz$ of it. Then by Corollary~\ref{cor:basis GD}, $\loz$ contains a GD-cycle. Let $i <j < k$ be the indices of the lines that cross the GD-cycle in the construction of $\Psibis(\loz)$ as illustrated in Figure~\ref{fig:GD pat}. Then the left line $\lline_i$, the right line $\rline_j$ and the downward line $\dline_k$ cross each others, which implies that $(i,j,k)$ is an occurrence of $\pat$ in $\Psibis(\loz)$. 
    \begin{figure}[ht]
        \centering
        \includegraphics[page=4, height=4.5cm]{Figures/Fig_proof.pdf}
        \caption{A GD-cycle yields an occurrence of $\pat$.}
        \label{fig:GD pat}
    \end{figure}
    \smallskip

    For the converse, let $\sigtau$ be a $3$-permutation avoiding $\ddincr$ and containing an occurrence $(i, j, k)$ of $\pat$. Let $\loz$ be a lozenge tiling such that $\Psibis(\loz) = \sigtau$ (Theorem~\ref{thm:image psi} ensures it exists). Consider the three sets of lines $\Yline(i)$, $\Yline(j)$ and $\Yline(k)$. Since $(i,j,k)$ is an occurrence of $\pat$, they must reach the bottom border in the order $(j,k,i)$ and the left one in the order $(k,i,j)$. Combining this with the fact that two sets of lines intersect exactly once, we obtain that $\Yline(i)$, $\Yline(j)$ and $\Yline(k)$ must intersect in one of the two ways depicted in Figure~\ref{fig:pat cycle}. 
    \begin{figure}[ht]
        \centering
        \includegraphics[page=5, height=5cm]{Figures/Fig_proof.pdf}
        \caption{How the lines corresponding to an occurrence of $\pat$ can intersect. }
        \label{fig:pat cycle}
    \end{figure}

    Now observe that each line in $\Yline(i)$, 
    followed from the unit triangle $t_i$ to the border, crosses a sequence of rhombi such that the upward triangle of one is incident to the downward triangle of the next. It follows that there is a cycle in 
    $\loz$ formed by a part of the rhombi crossed by $\lline_i, \rline_j$ and $\dline_k$. In particular, flipping this cycle yields a different tiling on the same configuration of triangles, so this configuration is not a basis. Therefore, $\sigtau$ is not the image through $\Psi$ of a basis.
\end{proof}

\begin{rem}
    The use of GD-cycles in the proof was not necessary: the arguments for the presence of GD-cycle yielding an occurrence of $\pat$  hold for any cycle that is crossed by three lines. However, cycles can be crossed by an arbitrary large (odd) number of lines, so we still need a result stating that if there is a cycle in a lozenge tiling then there is a cycle with only three lines. 
\end{rem}

We can now prove Theorem~\ref{thm:gamma and psi}.

\begin{proof}[Proof of Theorem~\ref{thm:gamma and psi}]
    Recall that according to Theorem~\ref{thm:image psi}, every $3$-permutation avoiding $\ddincr$ can be obtained as the image of a lozenge tiling. Then, Lemma~\ref{lem:image psi} states that a $3$-permutation avoiding $\pat$ can only be obtained as the image of a tiling of a basis, therefore the image $\Psi(\B_n)$ of bases of size $n$ is exactly $\A$. Finally, Theorem~\ref{thm:psi coord} states that $\Gamma\circ\Psi$ is the identity on $\B_n$ for any $n$, thus $\Gamma$ and $\Psi$ are bijections between $\B_n$ and $\A$ for all $n$, and they are inverse of each other. 
\end{proof}

\section{Shifted sums decomposition}
\label{sec:sums}

By combining ideas from \cite{SaSc23}, one can characterize bases as the configurations that admit a decomposition into ``shifted sums''. The decomposition can then be translated to $3$-permutation avoiding $\ddincr$ and $\pat$, yielding a decomposition similar to the one of separable permutations. This is the object of this section.

\subsection{Shifted sums and cuts on bases}

The decomposition of bases relies on the following shifted sum operations, which are illustrated in Figure~\ref{fig:shifted sums}.
\begin{defin}
Let $C_1$ and $C_2$ be two configurations and let $h \in \intval{0, |C_1|}$. The $h$-\emph{vertically-shifted sum} of $C_1$ and $C_2$ is the configuration of size $|C_1| + |C_2|$ defined as $C_1 \vsum_h C_2 \coloneqq ((|C_2|, 0) + C_1) \cup ((0, |C_1|-h)+C_2)$. Similarly, we define the $h$-\emph{horizontally-shifted sum} as $C_1 \hsum_h C_2 \coloneqq ((0,|C_2|)+ C_1) \cup ((h, 0)+C_2)$ and the $h$-\emph{diagonally-shifted sum} as $C_1 \dsum_h C_2 \coloneqq C_1 \cup ((|C_1|-h,h)+ C_2)$. 

If $C$ is a configuration, a pair $(C_1, C_2)$ of subconfigurations of $C$ is a vertical (resp. horizontal, resp. diagonal) \emph{cut} of $C$ if $C$ is a vertically (resp. horizontally, resp. diagonally) shifted-sum of $C_1$ and $C_2$. The \emph{position} of the cut is the coordinate of the line separating $C_1$ and $C_2$, which is equal to the size of $C_2$.
\end{defin}

\begin{figure}[ht!]
    \centering
    \includegraphics[page=4, width=\textwidth]{Figures/Fig_Gamma.pdf}
    \caption{Shifted sums of configurations and $3$-permutations. \\ Left: Vertical sum. Middle: Horizontal sum. Right: Diagonal sum. \\ Parameters $k$ and $h$ are preserved through $\Gamma$.}
    \label{fig:shifted sums}
\end{figure}

The following result derives from arguments in \cite{SaSc23}, although we did not state it in this form there. 
\begin{prop}
\label{prop:basis cut}
    A configuration is a triangle basis if and only if it is either a single point or a shifted sum of two triangle bases.
\end{prop}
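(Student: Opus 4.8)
The plan is to prove both implications by relating the filling operation $\filling$ to the three shifted-sum constructions, using the characterization that a configuration $C$ of size $n$ is a triangle basis iff $\filling(C) = T_n$ (the theorem of \cite{SaSc23} quoted above).

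First I would handle the easier direction: a shifted sum of two triangle bases is a triangle basis. Suppose $C = C_1 \vsum_h C_2$ with $C_1$ a basis of $T_{n_1}$ and $C_2$ a basis of $T_{n_2}$, $n = n_1 + n_2$. The copy $(n_2, 0) + C_1$ sits in the sub-triangle $(n_2,0) + T_{n_1} \subset T_n$ and, since $C_1$ is a basis, its filling is all of $(n_2,0)+T_{n_1}$; in particular the filling of $C$ contains the full left edge $\{(n_2, y) \mid 0 \le y < n_1\}$ (the hypotenuse of that sub-triangle touches this column). Meanwhile $(0,h)+C_2 \subset (0,h)+T_{n_2}$ fills that sub-triangle. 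The key geometric point is that together these two filled triangles, via filling steps across the strip $0 \le x < n_2$, propagate to fill all of $T_n$: one fills the parallelogram-shaped region between them row by row (or column by column), using on each new cell two already-known neighbours among $\{(x,y),(x+1,y),(x,y+1)\}$. I would check the boundary arithmetic — that the two triangles are positioned so that $x+y < n$ is exactly the region swept — and that $h \le |C_1| = n_1$ guarantees no overshoot. The horizontal and diagonal cases are symmetric (indeed, by the $120^\circ$ rotational symmetry mentioned in the remark on $\Gamma$, or directly by the same row/column-sweep argument), so it suffices to treat one carefully and invoke symmetry for the others.

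For the converse — a triangle basis that is not a single point admits a shifted-sum decomposition — the idea is to look at where $\filling(C)$ "first splits". Since $C$ is a basis, $\filling(C) = T_n$, but consider the filling of proper sub-configurations, or equivalently track the filling process and find the first filling step that bridges two components. Concretely: by the structure theorem of \cite{SaSc23}, for any sample $P$, $\filling(P)$ is a union of non-touching triangles with sizes summing to at most $|P|$. Apply this not to $C$ but look at the connected structure: I would argue that there is a line (vertical $x = c$, horizontal $y = c$, or diagonal $x+y = c$) that separates $C$ into two nonempty parts $C_1, C_2$ such that each part is "self-contained" — its own filling is a single triangle of size $|C_i|$ — and the relative offset of the two triangles is exactly one of the three shifted-sum patterns. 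The cleanest route is: since $T_n$ for $n \ge 2$ is itself a union of a triangle $T_{n-1}$ and a single extra row/column/diagonal, and since $|C| = n$ with $\filling(C) = T_n$ forces $C$ to be sparse (by the sparsity proposition) and hence "tight" on every sub-triangle, one can show $C$ cannot meet all three of the outer edges' corners in an irreducible way — there must be an edge of $T_n$ along which the points of $C$ can be peeled off as a smaller basis.

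The main obstacle I anticipate is the converse direction, specifically proving that the separating line exists and that both sides are themselves bases (not merely that one side fills a triangle). The danger is a configuration that is "globally" a basis but is genuinely indecomposable; one must rule this out. I expect the resolution to use a minimal-counterexample or extremal argument: take the smallest sub-triangle $(a,b)+T_k$ that contains strictly more structure than a single edge allows, or take a point on the boundary of $T_n$ (a corner, which by sparsity and the filling-$=T_n$ condition must be occupied or forced early) and show the filling process factors through removing it. I would also need to verify confluence/consistency — that the decomposition found is genuinely one of the three named shifted sums with the correct parameter $h$ — which is bookkeeping but essential, and I would lean on Figure~\ref{fig:shifted sums} and the explicit coordinate formulas in the definition to pin down $h$.
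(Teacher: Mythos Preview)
Your plan for the ``$\Leftarrow$'' direction (a shifted sum of two bases is a basis) is exactly what the paper does, only the paper is terser: it simply observes that each $B_i$ fills a triangle of its own size, that these two triangles touch, and that therefore their filling is the smallest triangle containing them, which has size $|C_1|+|C_2|=|C|$. Your row-by-row sweep is a correct way to justify that last step, which the paper leaves implicit.

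For the ``$\Rightarrow$'' direction, be aware that the paper's displayed proof \emph{does not treat it at all}: only the backward implication is argued, and the existence of a cut for every basis of size $\geq 2$ is effectively taken as given (presumably as part of the filling/structure theory imported from \cite{SaSc23}). So your elaborate plan here goes beyond what the paper provides. Of the several routes you float, the ``last merge'' idea is the clean one and works: run the filling process as successive merges of touching triangles; since $\filling(C)=T_n$ is a single triangle, the final merge joins two triangles $T'$ and $T''$ coming from disjoint subsets $C_1,C_2$ of $C$; by the structure theorem their sizes satisfy $|T'|+|T''|\leq |C_1|+|C_2|=n$, while filling $T_n$ forces equality, so each $C_i$ is a basis of its triangle. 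The only remaining check is that two touching triangles inside $T_n$ whose sizes sum to $n$ are positioned in one of exactly the three shifted-sum patterns, which is a short case analysis on the three adjacency vectors $(1,0),(0,1),(1,-1)$. Your alternative suggestions (separating lines, minimal counterexamples, peeling a boundary point) are vaguer and would take more work to make precise; I would drop them in favour of the merge-tree argument.
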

\begin{proof}
    Let $B_1, B_2$ be two bases. Let $C$ be a shifted sum of $B_1$ and $B_2$. Then each $B_i$ fills a triangle of its size and those triangles touch, so $C$ fills the smallest triangle containing them, which has size $|B_1|+|B_2|=|C|$ (see \cite[proof of Lemma 2]{SaSc23}). Therefore $C$ is also a basis.

    For the converse, let $B$ be a basis. According to \cite[Proof of Lemma 2]{SaSc23}, one can fill $B$ by ``sequentially merging triangles'' as follows:
    \vspace{-0.1cm}
    \begin{itemize}
    \itemsep=-2pt
        \item[-] Start by considering each point as a triangle of size $1$.
        \item[-] Choose two triangles that touch and fill completely the smallest triangle that contains them.
    \end{itemize}
    It is easy to see that when two triangles touch, it is indeed possible to fill the smallest triangle that contains them. Moreover, if $T$ is a triangle filled by this process, then the subconfiguration $C$ of $B$ contained in $T$ is also a basis. Indeed, as $B$ is a basis, $C$ is sparse, so its the size is at most the size of $T$. Furthermore, by construction $C$ fills, so $|C| = |T|$ and $C$ is a basis. Finally, consider the last two triangles $T_1$ and $T_2$ that are merged by the process and denote by $C_i$ the subconfiguration of $B$ contained in $T_i$. Observe that since both $B$ and the $C_i$ are bases, $T_1$ and $T_2$ cannot intersect (otherwise $B$ would not fill) and their size must sum to the size of $B$, so all the points of $B$ are either in $C_1$ or in $C_2$. This implies that $B$ is a shifted sum of $C_1$ and $C_2$.
\end{proof}

\begin{rem}
    Observe that a given basis can admit several such cuts. For example, the line $L_n=\intval{0,n-1} \times \{0\}$ can be obtained as the diagonal sum of shift $0$ or as the vertical sum of shift $k$ of a line of size $k$ and a line of size $n-k$ for any $1 \leqslant k < n$. 
\end{rem}

\subsection{Shifted sums and cuts on 3-permutations}

We now define the shifted sums on $3$-permutations and show that our bijection transports the cuts. 
\begin{defin}
Let $\colpair{\sigma_1}{\tau_1} \in \S_{k_1}^2$ and $\colpair{\sigma_2}{\tau_2} \in \S_{k_2}^2$ be two $3$-permutations and let $h \in \intval{0,k_1}$. Their $h$-shifted sums are defined as the $3$-permutations $\sigtau$ of size $n = k_1+k_2$ obtained by inserting the diagram $\colpair{\sigma_2}{\tau_2}$ into the one of $\colpair{\sigma_1}{\tau_1}$ at the following position:
\vspace{-0.1cm}
\begin{itemize}
\itemsep=0pt
    \item Vertical sum: Put $\colpair{\sigma_2}{\tau_2}$ in the block $\intval{k_1+1, n}\times \intval{1,k_2} \times \intval{h+1, h+k_2}$.
    \item Horizontal sum: Put $\colpair{\sigma_2}{\tau_2}$ in the block $\intval{1,k_2}\times\intval{h+1, h+k_2}\times\intval{k_1+1,n}$. 
    \item Diagonal sum: Put $\colpair{\sigma_2}{\tau_2}$ in the block $\intval{h+1, h+k_2} \times \intval{k_1+1, n} \times \intval{1, k_2}$.
\end{itemize}
Theses sums are illustrated in Figures~\ref{fig:shifted sums} (with projections) and \ref{fig:3D sums} (in 3D).
As for configurations, we say that a $3$-permutation admits a cut if it is a shifted sum of two $3$-permutations. 
\end{defin}
\begin{figure}[ht!]
    \centering
    \includegraphics[page=8, width=0.8\textwidth]{Figures/Fig_Gamma.pdf}
    \caption{Shifted sums of $3$-permutations. \\ Left: Vertical sum. Middle: Horizontal sum. Right: Diagonal sum.}
    \label{fig:3D sums}
\end{figure}

Cuts are transported by $\Gamma$ (and so $\Psi$) as follows:

\begin{lemma}
\label{lem:cut transfer}
    Let $\sigtau \in Av_n(\ddincr)$. Fix a direction and let $\usum \in \{\vsum, \hsum, \dsum\}$ be the corresponding shifted sum operator. Then $\sigtau$ admits a cut $\sigtau=\colpair{\sigma_1}{\tau_1} \usum_h \colpair{\sigma_2}{\tau_2}$ in that direction, at a given position $k$ with given shift $h$ if and only if $\Gamma(\sigtau)$ does. In that case, ${\Gamma(\colpair{\sigma_1}{\tau_1} \usum_h \colpair{\sigma_2}{\tau_2})} = \Gamma(\colpair{\sigma_1}{\tau_1}) \usum_h \Gamma(\colpair{\sigma_2}{\tau_2})$.
\end{lemma}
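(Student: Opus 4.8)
The strategy is to handle the three directions (vertical, horizontal, diagonal) separately, but by exploiting the symmetries recorded in the remark after the definition of $\Gamma$ it suffices to treat one of them in full detail — say the vertical cut — and deduce the other two by applying the $120^\circ$ rotation $\sigtau \mapsto (\red{\tau^{-1}}, \green{\sigma\circ\tau^{-1}})$ and the mirror symmetry. So I would first state this reduction explicitly, then concentrate on the vertical case.

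For the vertical case, I would argue in both directions. For the ``only if'' direction: suppose $\sigtau = \colpair{\sigma_1}{\tau_1} \vsum_h \colpair{\sigma_2}{\tau_2}$ with $|\colpair{\sigma_1}{\tau_1}| = k_1$, $|\colpair{\sigma_2}{\tau_2}| = k_2$. By the placement rule, the indices $i$ for which the point $(i,\sigma(i),\tau(i))$ comes from the $\colpair{\sigma_2}{\tau_2}$-block are exactly $\intval{1,k_2}$, and those from $\colpair{\sigma_1}{\tau_1}$ are $\intval{k_2+1,n}$; moreover on the $\colpair{\sigma_1}{\tau_1}$-indices the $\sigma$-values lie in $\intval{k_1+1-h+?}$ — more precisely I would just read off from the block $\intval{k_1+1-h, n-h}$ for $\tau$ and $\intval{1,k_1}$ for $\sigma$ on those indices. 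The key computation is then: for $i$ an index of $\colpair{\sigma_1}{\tau_1}$, every index of $\colpair{\sigma_2}{\tau_2}$ lies to its left and has smaller $\sigma$-value (since $\sigma_2$-values occupy $\intval{k_1+1,n}$ while $\sigma_1$-values occupy $\intval{1,k_1}$), so $\x(i)$ counts only inversions within the $\colpair{\sigma_1}{\tau_1}$ part — giving $\x(i) = r_{\sigma_1}(i - k_2) + k_2$ — while $\y(i)$ counts only left-inversions within the $\colpair{\sigma_1}{\tau_1}$ part plus possibly some from the $\colpair{\sigma_2}{\tau_2}$ part, and the shift $h$ on the $\tau$-coordinate is exactly engineered so that $\y(i) = l_{\tau_1}(i-k_2)$. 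Dually, for $i$ an index of $\colpair{\sigma_2}{\tau_2}$, all $\colpair{\sigma_1}{\tau_1}$-indices are to its right with larger $\sigma$-value so contribute nothing to $\x(i)$, giving $\x(i) = r_{\sigma_2}(i)$, while $\y(i) = l_{\tau_2}(i) + h$ because exactly the first $h$ of the $\tau$-values of the $\colpair{\sigma_1}{\tau_1}$-block (those in $\intval{k_1+1-h,k_1}$... ) sit above. Plugging these into the definition of $\vsum_h$ for configurations shows $\Gamma\sigtau = \Gamma\colpair{\sigma_1}{\tau_1} \vsum_h \Gamma\colpair{\sigma_2}{\tau_2}$, which in particular exhibits the vertical cut of $\Gamma\sigtau$ at the same position and shift.

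For the ``if'' direction: suppose $\Gamma\sigtau$ has a vertical cut at position $k$ with shift $h$, i.e. $\Gamma\sigtau = C_1 \vsum_h C_2$ with $|C_1| = k_1$, $|C_2| = k_2 = k$. Here the hypothesis $\sigtau \in Av_n(\ddincr)$ is essential: by the proposition that $\Gamma\sigtau$ is a configuration of size $n$, the $n$ points carry distinct labels $1,\dots,n$, and the geometric splitting $C_1 \sqcup C_2$ of the point set pulls back to a splitting $I_1 \sqcup I_2$ of the index set. I would then show that $I_2 = \intval{1,k_2}$ and $I_1 = \intval{k_2+1,n}$ — this is where one uses that the $\ddincr$-avoidance forces, for $i<j$, either $\x(i)>\x(j)$ or $\y(i)<\y(j)$, so that an index in $I_1$ (whose point has $x$-coordinate $\geq k_2$) cannot precede an index in $I_2$ (whose point has $x$-coordinate $< k_2$). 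Once the index sets are identified as initial/final segments, the values of $\sigma$ and $\tau$ restricted to each segment are forced to occupy the prescribed blocks — again reading backwards the same inversion-count identities — so $\sigtau$ is genuinely the vertical shifted sum of the standardizations $\colpair{\sigma_1}{\tau_1}$ and $\colpair{\sigma_2}{\tau_2}$, and these standardizations are exactly $\Gamma^{-1}C_1$, $\Gamma^{-1}C_2$ in the labeled sense.

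The main obstacle I expect is bookkeeping, not conceptual: one must be careful about the off-by-$h$ bookkeeping in the $\tau$-coordinate (which left-inversions of the merged permutation survive and which are killed by the interleaving of the two $\tau$-blocks) and about the analogous role of $k_2$ in the $\sigma$-coordinate, and one must check that the index set really does split as claimed — it is tempting to take this for granted, but it genuinely requires the pattern-avoidance hypothesis. Once the vertical case is nailed down with these identities, the horizontal and diagonal cases should follow mechanically from the symmetry remark, since the symmetries permute the three sum operations $\vsum$, $\hsum$, $\dsum$ and commute with $\Gamma$; I would spell out which symmetry sends which sum to which so that the reduction is airtight.
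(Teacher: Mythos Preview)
Your overall strategy is sound, and the symmetry reduction is actually more economical than what the paper does: the paper handles the vertical and diagonal cases by separate explicit computations and only dismisses the horizontal case ``symmetrically''. The forward direction (cut of $\sigtau$ $\Rightarrow$ cut of $\Gamma\sigtau$) is, as you anticipate, a routine inversion count, and your sketch matches the paper's argument --- modulo a pervasive bookkeeping slip: in the paper's vertical sum the $\colpair{\sigma_1}{\tau_1}$-block occupies the indices $\intval{1,k_1}$, not $\intval{k_2+1,n}$, so several of your intermediate formulas have the two blocks swapped.

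There is, however, a genuine gap in your reverse direction. First, the argument you give for identifying the index sets $I_1,I_2$ does not work: the $\ddincr$-avoidance says that for $i<j$ either $\x(i)>\x(j)$ or $\y(i)<\y(j)$, but if $i\in I_1$ (large $x$-coordinate) and $j\in I_2$ (small $x$-coordinate) then the first alternative already holds and nothing is contradicted. The paper instead uses the elementary bound $x_i\leqslant n-i$: points of $C_1$ have $x_i\geqslant |C_2|$, which forces their labels into $\intval{1,k_1}$, and cardinality turns the inclusion into an equality. Second --- and this is where you underestimate the difficulty --- once the index interval is pinned down you must still show that the \emph{values} $\sigma(\intval{1,k_1})$ and $\tau(\intval{k_1+1,n})$ land in the prescribed blocks; ``reading backwards the inversion-count identities'' does not accomplish this. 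The paper needs a short downward induction on $i\leqslant k_1$ to establish $\intval{k_1+1,n}\subset R_\sigma(i)$, and for $\tau$ it looks at the indices $M$ and $m$ where $\tau$ attains its maximum and minimum on $\intval{k_1+1,n}$, observes $L_\tau(M)\subset L_\tau(m)$, and then squeezes via $x_m+y_m<|C_2|+h$, the $\ddincr$-avoidance entering precisely at this step. These two small arguments are the only non-obvious ideas in the proof, and your plan does not contain them.
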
 
\begin{proof}
    Let $\sigtau \in Av_n(\ddincr)$. First assume that $\Gamma(\sigtau) = C_1 \usum_hC_2$ for some configurations $C_1$ and $C_2$, some sum $\usum$ and some shift $h\in\intval{0,|C_1|}$. Observe that one can build a lozenge tiling of $\Gamma(\sigtau)$ by tiling the triangles containing $C_1$ and $C_2$ respectively, and then filing the two remaining large rhombi with rhombi in the same direction (see Figure~\ref{fig:cut psi}). In fact, since $\Gamma(\sigtau)$ is a basis, this is the only possible lozenge tiling, and in particular $C_1$ and $C_2$ admit a unique tiling and are therefore bases. Now, assume for instance that the sum is vertical. Then observe that by construction of $\Psi$, we have $\sigtau = \Psi(C_1)\vsum_h\Psi(C_2)$. Indeed, denoting $\colpair{\sigma_1}{\tau_1} = \Psi(C_1)$, $\colpair{\sigma_2}{\tau_2} = \Psi(C_2)$ and $n = |C_1|+|C_2|$, for $i \in \intval{|C_1|+1, n}$ we have $\sigma(i) = \sigma_2(i)$ and $\tau(i) = \tau_2(i) +h$; and for $i \in \intval{1, |C_1|}$, we have $\sigma(i) = \sigma_1(i) + |C_2|$ and then $\tau(i) = \tau_1(i)$ if $\tau_1(i) \leqslant h$ and $\tau(i) = \tau_1(i) + |C_2|$ if $\tau_1(i) > h$ (see Figure~\ref{fig:cut psi}, left). The symmetric statement holds for the two other directions.

    \begin{figure}[ht]
        \centering
        \includegraphics[page=6,width=0.95\linewidth]{Figures/Fig_Psi.pdf}
        \caption{The unique lozenge tiling of the shifted sum of two bases, and how it wires the associated $3$-permutation. \\ Left: Vertical sum. Middle: Horizontal sum. Right: Diagonal sum.}
        \label{fig:cut psi}
    \end{figure}
    
    Now, assume $\sigtau$ can be written as a shifted sum of two $3$-permutations $\colpair{\sigma_1}{\tau_1}$ and $\colpair{\sigma_2}{\tau_2}$.
    
    First assume this sum is vertical and denote by $\ell$ the size of $\colpair{\sigma_1}{\tau_1}$. Then for all $i \in \intval{1,\ell}$ we have $R_\sigma(i) = R_{\sigma_1}(i) \cup \intval{\ell+1, n}$ and $L_\tau(i) = L_{\tau_1}(i)$ so the first $\ell$ points of $\Gamma(\sigtau)$ are $\Gamma(\colpair{\sigma_1}{\tau_1})+(n-\ell, 0)$ ; and for all $i \in \intval{\ell+1, n}$, $R_\sigma(i) = R_{\sigma_2}(i)$ and $L_\tau(i) = L_{\tau_1}(i) + \ell-h$ so the last $n-\ell$ points of $\Gamma(\sigtau)$ are $\Gamma(\colpair{\sigma_2}{\tau_2})+(0, \ell-h)$. This is exactly $\Gamma(\colpair{\sigma_1}{\tau_1} \vsum_h \colpair{\sigma_2}{\tau_2}) = \Gamma(\colpair{\sigma_1}{\tau_1}) \vsum_h \Gamma(\colpair{\sigma_2}{\tau_2})$.

    The horizontal case is obtained symmetrically so let us now assume the cut is diagonal. Such a cut separates $\intval{1,n}$ into $3$ parts, $\intval{1, h}$, $\intval{h+1, h+k}$ and $\intval{h+k+1, n}$. If $i \notin \intval{h+1, h+k}$, then $R_\sigma(i) = R_{\sigma_1}(i)$ and $L_\tau(i) = L_{\tau_1}(i)$ so the image of these points is exactly $\Gamma(\colpair{\sigma_1}{\tau_1})$. If $i \in \intval{h+1, h+k}$ then $R_\sigma(i) = R_{\sigma_2}(i) \cup \intval{h+k+1, n}$ and $L_\tau(i) = L_{\tau_2}(i) \cup \intval{1, h}$ so the image of these points through $\Gamma$ is $\Gamma(\colpair{\sigma_2}{\tau_2}) + (n-k-h, h)$. That is $\Gamma(\colpair{\sigma_1}{\tau_1} \dsum_h \colpair{\sigma_2}{\tau_2}) = \Gamma(\colpair{\sigma_1}{\tau_1}) \dsum_h \Gamma(\colpair{\sigma_2}{\tau_2})$.
\end{proof}    

Combining this lemma with Proposition~\ref{prop:basis cut}, we obtain the following characterization of $3$-permutation avoiding $\ddincr$ and $\pat$.

\begin{thm}
\label{thm:permut carac}
    A $3$-permutation avoiding $\ddincr$ and $\pat$ is either $(\blue{1},\red{1})$ or a shifted sum of two permutations in $Av(\ddincr, \pat)$.
\end{thm}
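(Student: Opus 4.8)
The plan is to read this statement off directly from Lemma~\ref{lem:cut exist}, together with the elementary observation that pattern avoidance is inherited by the factors of a shifted sum. First I would dispose of the base case: the unique $3$-permutation of size $1$ is $(\blue{1},\red{1})$, so for $n=1$ there is nothing to prove. Hence assume $n \geqslant 2$ and $\sigtau \in \A$.

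By Lemma~\ref{lem:cut exist}, $\sigtau$ admits a cut, that is, $\sigtau = \colpair{\sigma_1}{\tau_1} \usum_h \colpair{\sigma_2}{\tau_2}$ for some direction (vertical, horizontal or diagonal), some position and some shift $h$, with $\colpair{\sigma_1}{\tau_1} \in \S_{k_1}^2$ and $\colpair{\sigma_2}{\tau_2} \in \S_{k_2}^2$ and $k_1+k_2=n$. It then remains only to check that both factors lie in $Av(\ddincr, \pat)$. For this I would note that, by the very definition of the shifted sums on $3$-permutations, each factor $\colpair{\sigma_j}{\tau_j}$ is, once standardized, the restriction $\sigtau_{|I_j}$ of $\sigtau$ to a subset $I_j \subset \intval{1,n}$ of indices: for the vertical and horizontal sums the $I_j$ are the two intervals $\intval{1,k_1}$ and $\intval{k_1+1,n}$, while for the diagonal sum one has $I_2 = \intval{h+1,h+k_2}$ and $I_1 = \intval{1,h}\cup\intval{h+k_2+1,n}$. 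Since any occurrence of $\ddincr$ or $\pat$ inside $\sigtau_{|I_j}$ is in particular an occurrence inside $\sigtau$, and $\sigtau$ avoids both patterns, so does each $\colpair{\sigma_j}{\tau_j}$; hence $\colpair{\sigma_1}{\tau_1}, \colpair{\sigma_2}{\tau_2} \in Av(\ddincr, \pat)$, which is exactly the claim.

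I do not expect a genuine obstacle at this final step: all the substantive work has already been carried out in Lemma~\ref{lem:cut exist} (existence of a cut for every element of $\A$, the long case analysis on the direction of the cut of $\colpair{\sigma_n}{\tau_n}$). The only point deserving an explicit word is the hereditary nature of pattern avoidance under restriction, which is immediate from the definition of pattern containment for $d$-permutations, since standardizing a restriction of a restriction is the same as standardizing the corresponding restriction of $\sigtau$. It is worth stressing, finally, that this theorem is the combinatorial engine behind the main result: combined with the recursive characterization of triangle bases (a configuration is a basis iff it is a single point or a shifted sum of two bases) and with Lemma~\ref{lem:cut transfer} (compatibility of cuts with $\Gamma$), it yields by induction on $n$ that $\Gamma$ restricts to a bijection $\A \to \B_n$.
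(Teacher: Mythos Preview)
Your argument is correct and is essentially the paper's own: the paper does not give a separate proof block but simply states that the theorem follows from the preceding lemmas, and your write-up makes explicit the one missing ingredient, namely that each factor of a shifted sum is (after standardization) a restriction $\sigtau_{|I_j}$ and therefore inherits the avoidance of $\ddincr$ and $\pat$. One small inaccuracy: in the horizontal sum the index sets are $I_2=\intval{1,k_2}$ and $I_1=\intval{k_2+1,n}$ (not $\intval{1,k_1}$ and $\intval{k_1+1,n}$), but this is only a labeling slip and does not affect the argument.
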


\begin{rem}
    This characterization of $\A$ can be seen as an analogue to the result of Asinowski and Mansour for separable $d$-permutations \cite{AsMa10}, with a more general notion of cuts.
\end{rem}

\begin{rem}
    It is possible to prove directly that every $3$-permutation avoiding $\ddincr$ and $\pat$ admit a cut by induction, but this requires a tedious case by case study.
\end{rem}

\section{Solitaire}
\label{sec:solitaire}

In this section, we recall a dynamical system on configurations called the \emph{triangle solitaire}, which was introduced by Salo in~\cite{Sa22}, and we extend it to $3$-permutations. This dynamical system presents nice properties that make it useful for random generation purposes, or to extend our bijection to other pattern avoiding permutations classes.

\subsection{Solitaire on the grid}

\begin{defin}
    Let $P$ and $Q$ be sets of points with the same cardinality. There is a \emph{solitaire move} from $P$ to $Q$, denoted $P \move Q$, if there is a position $(x,y) \in \Z^2$ such that their symmetric difference $P \Delta Q$ is a subset of size $2$ of $\{(x,y), (x+1, y), (x, y+1)\}$, i.e. $P\setminus Q = \{p\}$ and $Q\setminus P = \{q\}$ with $p$ and $q$ two points that are in a same size $2$ triangle (see Figure~\ref{fig:solit grid}).
    If $P$ is a configuration, its \emph{orbit}, denoted $\orb(P)$, is the set of configurations reachable from it using solitaire moves.
\end{defin}

Intuitively, seeing $\Z^2$ as a playing grid and $P$ and $Q$ as marbles on this grid, when two marbles are adjacent a solitaire move allows us to move one using the other as a ``pivot''. 

\begin{rem}
    On lozenge tilings, solitaire move then consist in rotating a size $2$ triangle containing two unit triangle and a rhombus. This is a restriction of the \emph{trapezoid flips} studied briefly in \cite[Section 7]{MixedSubdivYY2026}. 
\end{rem}

\begin{figure}[ht]
    \centering
    \includegraphics[page=1, width=0.6\textwidth]{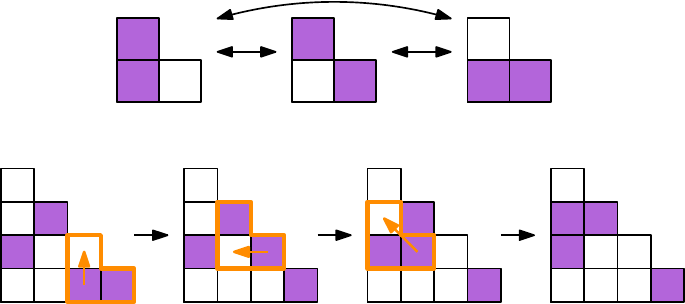}
    \caption{Top: The solitaire moves on the grid.\\ Bottom: An example of moves applied to a pattern.}
    \label{fig:solit grid}
\end{figure}

\begin{thm}[{\cite[Theorem 2]{SaSc23}}]
    \label{thm:orbit}
    For $n \geqslant 1$, the orbit of the line $L_n=\intval{0,n-1} \times \{0\}$ is exactly the set of triangle bases of size $n$.
\end{thm}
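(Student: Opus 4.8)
The plan is to prove the two inclusions $\orb(L_n)\subseteq\B_n$ and $\B_n\subseteq\orb(L_n)$ separately. For the first, recall (Figure~\ref{fig:solit grid}) that a solitaire move keeps two of the three cells of a unit triangle $\Delta=\{(x,y),(x+1,y),(x,y+1)\}$ occupied and replaces the third, so it preserves the size of a configuration; I claim it also preserves the filling. Indeed, if $P\move Q$ through $\Delta$, then $P\cap\Delta$ and $Q\cap\Delta$ are two-element subsets of $\Delta$ sharing a cell, so the unique cell of $Q\setminus P$ is obtained from $P$ by one filling step; hence $Q\subseteq\varphi(P)$, and symmetrically $P\subseteq\varphi(Q)$, so $\varphi(P)=\varphi(Q)$. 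Since $\varphi(L_n)=T_n$ (an easy induction, filling one row at a time) and $|L_n|=n$, every $P\in\orb(L_n)$ has size $n$ and filling $T_n$, hence $P\subseteq\varphi(P)=T_n$ is a configuration of size $n$ whose filling is $T_n$; by~\cite{SaSc23} it is a triangle basis.

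For $\B_n\subseteq\orb(L_n)$, note that moves are reversible, so $\orb$ restricts to an equivalence relation on $\B_n$ and it is enough to show it has a single class. I would argue by strong induction on $n$, the case $n=1$ being trivial. For $n\geq2$, write a basis $B$ as a shifted sum $B=B_1\usum_h B_2$ of two bases. The key localization step is that a single move $B_1\move B_1'$ between bases of the triangle $T_{|B_1|}$ lifts to a move $B_1\usum_h B_2\move B_1'\usum_h B_2$: the unit triangle of that move lies inside $\varphi(B_1)=T_{|B_1|}$ (its three cells all occur in a union of two subsets of $T_{|B_1|}$), and in each of the three shifted sums this copy of $T_{|B_1|}$ is disjoint, as a set of cells, from the copy of $T_{|B_2|}$ carrying $B_2$; so the move is unobstructed and leaves the relative placement of the two blocks untouched. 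By the induction hypothesis $B_1$ is joined to $L_{|B_1|}$ by a path of bases of $T_{|B_1|}$, and likewise for $B_2$; pushing these paths through the localization step shows $B$ is solitaire-equivalent to $L_{|B_1|}\usum_h L_{|B_2|}$.

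What remains, and what I expect to be the main obstacle, is to show that every configuration $L_k\usum_h L_{n-k}$ (over the three directions, $1\leq k\leq n-1$, $0\leq h\leq k$) lies in $\orb(L_n)$; this closes the induction. Here $L_n$ is itself such a sum (for instance $L_1\usum_0 L_{n-1}$), and the shift-$0$ vertical and diagonal sums already equal $L_n$, so the content is to connect the remaining shifts and the horizontal direction to these. I would do this by writing out explicit move sequences that ``shear'' a segment down or sideways one row at a time, each step using pivots created by the previous ones, so as to realize the elementary operations of decreasing a shift by one, moving a cell from one block to the other, and passing between the three cut directions; the order-$3$ rotation and the mirror reflection of the triangle, which send moves to moves and permute the three directions, let one recycle these computations. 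Morally this is the dynamical counterpart of the canonical-cut argument used for the injectivity of $\Gamma$.

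Finally, the whole argument can instead be routed through $\Gamma$: since $\Gamma$ maps $\A$ onto $\B_n$ and sends cuts to cuts (Lemmas~\ref{lem:cut exist} and~\ref{lem:cut transfer}), the statement is equivalent to saying that the solitaire transported to $3$-permutations turns $\A$ into a single orbit containing $\Gamma^{-1}(L_n)=\Hn$; the induction then has the same shape and only the manipulation of line shifted-sums must be redone, arguably more transparently on the permutation side where cuts are interval conditions on $\sigma$ and $\tau$.
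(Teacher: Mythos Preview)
This theorem is quoted from \cite{SaSc23}; the present paper does not give its own proof, so there is nothing here to compare your argument against. Your first inclusion $\orb(L_n)\subseteq\B_n$ is fine and is exactly the easy direction (moves preserve size and filling, and $\varphi(L_n)=T_n$).

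For the converse, the structure you propose is reasonable but it leans on a statement that the paper asserts without proof: the ``only if'' part of the Proposition saying that every basis of size $\geqslant 2$ is a shifted sum of two bases. The paper's proof of that Proposition only establishes the ``if'' direction; the other direction is what your induction step actually needs, and you have not supplied it. Worse, you should be wary of circularity: the natural routes to that decomposition in this paper go either through the surjectivity of $\Gamma$ (whose proof already \emph{uses} the decomposition) or through the orbit theorem itself. So before your induction can start you would have to give an independent argument that an arbitrary basis admits a cut; this is not obvious and is essentially the combinatorial content of \cite{SaSc23}. Your ``alternative route through $\Gamma$'' has the same problem: Lemma~\ref{lem:cut transfer} transports cuts, but the existence of a cut on the basis side is exactly what is missing.

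Separately, the step you flag as ``the main obstacle''---connecting every $L_k\usum_h L_{n-k}$ to $L_n$---is only sketched. The shearing idea is correct in spirit and not hard to carry out (move the top segment down one row at a time using the bottom segment as pivots, and use the dihedral symmetries to cover the three directions), but as written it is not a proof. If you want a self-contained argument, either (i) prove directly that every basis admits a cut (e.g.\ by analysing the last merge in a filling sequence, which produces two touching subtriangles whose preimages in $B$ give the two pieces), or (ii) abandon the cut decomposition and argue, as \cite{SaSc23} presumably does, purely in terms of fillings: show that any configuration with filling $T_n$ can be solitaire-moved to one having a point on the bottom row at the rightmost occupied column, peel that point off, and induct.
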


\begin{rem}
    The theorem is in fact stronger and states that if two set of points have the same filling and if each triangle in their filling contains the same number of original points in both sets then they are in the same orbit.
\end{rem}

\subsection{Solitaire on 3-permutations}

The solitaire extends naturally to labeled configurations. It can then be pulled back through $\Gamma$ to obtain a dynamical system on $Av(\ddincr)$. 



\begin{defin}
    Let $\sigtau \in \A$ and $1 \leqslant i < j \leqslant n$. A \emph{solitaire move} at $(i,j)$ can be performed in $\sigtau$ if $R_\sigma(i)$ is either $R_\sigma(j)$ or $R_\sigma(j) \cup \{j\}$ and $L_\tau(j)$ is either $L_\tau(i)$ or $L_\tau(i) \cup \{i\}$ (see Figure~\ref{fig:solit moves}). A solitaire move then consists in exchanging the coordinates of $i$ and $j$ along one of the three axes in the diagram of $\sigtau$, i.e. exchanging the values of $\sigma(i)$ and $\sigma(j)$ or $\tau(i)$ and $\tau(j)$ or both (which is the same as exchanging $i$ and $j$). See Figure~\ref{fig:solit permut} for an example.
\end{defin}

\begin{figure}[ht]
    \centering
    \includegraphics[page=2, width=0.9\textwidth]{Figures/Fig_solit.pdf}
    \caption{The solitaire moves on the grid and the corresponding situation on $3$-permutations avoiding $\ddincr$, with $i<j$. Hashed areas must be empty for the move to be allowed. One moves from one position to the other by exchanging the coordinates of points $i$ and $j$ along the axis indicated in the diagram.}
    \label{fig:solit moves}
\end{figure}

\begin{figure}[ht]
    \centering
    \includegraphics[page=3, width=\textwidth]{Figures/Fig_solit.pdf}
    \caption{Examples of moves on a $3$-permutation.}
    \label{fig:solit permut}
\end{figure}

\begin{prop}
\label{prop:solit corresp}
    Let $\sigtau \in Av_n(\ddincr)$. Then for all $1\leqslant i < j \leqslant n$ a solitaire move can be performed at $(i,j)$ if and only if one can be performed on $(p_i, p_j)$ in the configuration $\Gamma(\sigtau)$. Furthermore if a solitaire move sends $\sigtau$ to $\colpair{\sigma'}{\tau'}$ then it also sends $\Gamma(\sigtau)$ to $\Gamma(\colpair{\sigma'}{\tau'}$ and reciprocally.
\end{prop}
\begin{proof}
    Let $1 \leqslant i<j \leqslant n$. If $i$ and $j$ satisfy the condition for a solitaire move in $\sigtau$, then $\x(i) \in \{\x(j), \x(j)+1\}$ and $\y(j) \in \{\y(i), \y(i)+1\}$. So $p_i$ and $p_j$ are in a triangle of size $2$, and since $\Gamma(\sigtau)$ is sparse according to Theorem~\ref{thm:sparse}, the third point of this triangle is not in $\Gamma(\sigtau)$. So a solitaire move can be performed on $p_i$ and $p_j$ in $\Gamma(\sigtau)$. Then, it is clear from the equalities on the inversion sets that performing a solitaire move in $\sigtau$ changes the coordinates of $p_i$ and $p_j$ in $\Gamma(\sigtau)$ as announced.

    \smallskip
    For the converse, assume a solitaire move can be performed on $(p_i, p_j)$. Observe that since $\sigtau$ avoids $\ddincr$ either $x_i > x_j$ or $y_i < y_j$ so we know which point is $p_i$ and which is $p_j$: they must be arranged as in Figure~\ref{fig:solit moves}. 
    
    First, let us consider the case where $x_j=x_i$ and $y_j=y_i+1$ (left case). To have $x_j=x_i$, one must have $\sigma(i) < \sigma(j)$ and $|\Rs(i)\setminus\Rs(j)|=|\Rs(j)\setminus\Rs(i)|$. As $\sigtau$ avoids $\ddincr$, this implies $\tau(i) > \tau(j)$ and then since $y_j=y_i+1$ we must have $\Lt(j) = \Lt(i)\cup\{i\}$. This means that the dashed area in the diagram of $\tau$ is empty. Now lets go back to $\sigma$. If there were an index $i<k<j$ such that $\sigma(k) < \sigma(j)$ then we should have $\tau(k) > \tau(j)$, which we just proved to be impossible. So $\Rs(i)\setminus\Rs(j) = \varnothing$ and then as $|\Rs(i)\setminus\Rs(j)|=|\Rs(j)\setminus\Rs(i)|$ we have $\Rs(i)=\Rs(j)$. The case $x_i=x_j+1$ and $y_i=y_j$ is handled symmetrically. 
    
    Now assume $x_i=x_j+1$ and $y_j=y_i+1$. As $\sigtau$ avoids $\ddincr$, we must have either $\sigma(i) > \sigma(j)$ and so $\Rs(i) = \Rs(j)\cup\{j\}$ or $\tau(i) > \tau(j)$ and so $\Lt(j) = \Lt(i)\cup\{i\}$. Then by considering a point $k$ between $i$ and $j$, we obtain that both must hold with the same argument as in the previous cases. 

    Lastly, the simplest way to check that a solitaire move on the configuration has the announced effect on the permutation is to look at a lozenge tiling of the configuration. Indeed, performing a solitaire move changes which of the lines of $\Yline(i)$ and $\Yline(j)$ intersect (see Figure~\ref{fig:solit lozenge}). Then recall from the proof of Theorem~\ref{thm:image psi} that this corresponds to composing $\sigma$, $\tau$ or both with the transposition $(i,j)$, which is exactly the corresponding solitaire move on $\sigtau$. 
    \begin{figure}[ht]
    \centering
    \includegraphics[page=4, width=0.8\textwidth]{Figures/Fig_solit.pdf}
    \caption{The solitaire moves on lozenge tilings and their effect on the associated $3$-permutation.}
    \label{fig:solit lozenge}
\end{figure}
\end{proof}

Denote by $\desc_n$ the permutation $n(n-1)(n-2)\ldots21$. The mapping $\Gamma$ translates Theorem~\ref{thm:orbit} to $3$-permutations as follows. 

\begin{thm}
    For all $n\geqslant 1$, $\orb((\desc_n, \id_n)) = Av_n(\ddincr, \pat)$.
\end{thm}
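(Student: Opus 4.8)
The plan is to transport the statement through the bijection $\Gamma$ and reduce it to Theorem~\ref{thm:orbit}. First I would record two easy facts about the starting point. Since $\sigma=\desc$ is strictly decreasing, $\x(i)=n-i$ for every $i$, and since $\tau=\id$ is strictly increasing, $\y(i)=0$ for every $i$; hence $\Gamma(\desc,\id)=\{(n-i,0)\mid 1\leqslant i\leqslant n\}=L_n$. Moreover $(\desc,\id)\in\A$, because a strictly decreasing first component cannot take part in an occurrence of $\ddincr$ or of $\pat$ (both need an ascent of $\sigma$ at two of the chosen indices); this also re-proves $L_n\in\B_n$.

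The core of the proof is the move correspondence. Fix $\sigtau\in\A$ and $1\leqslant i<j\leqslant n$. On the one hand, the conditions of the definition of the $3$-permutation solitaire move — $R_\sigma(i)$ equal to $R_\sigma(j)$ or $R_\sigma(j)\cup\{j\}$, and $L_\tau(j)$ equal to $L_\tau(i)$ or $L_\tau(i)\cup\{i\}$ — say precisely that $p_i$ and $p_j$ occupy two of the three cells of some $L$-tromino $(a,b)+T_2=\{(a,b),(a+1,b),(a,b+1)\}$ in $\Gamma\sigtau$, the third cell being empty because $\Gamma\sigtau$ is sparse and an $L$-tromino cannot hold three points. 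On the other hand, because $\sigtau$ avoids $\ddincr$ the labelling of $\Gamma\sigtau$ is determined, so any grid solitaire move on $\Gamma\sigtau$ slides one labelled marble $p_i$ onto the free cell of such a tromino with $p_j$ as pivot; one checks that the inversion counts $\x(i),\x(j),\y(i),\y(j)$ being forced to be $\pm1$-close, together with $\ddincr$-avoidance, upgrade those size constraints to the set-level conditions of the definition (so the move is available on $\sigtau$), and that exchanging the coordinates of $i$ and $j$ along the matching axis of the diagram — the indices strictly between $i$ and $j$ being pushed out of range by the hashed (empty) regions and by $\ddincr$-avoidance, so that only the inversion data of $i$ and $j$ move — produces the $3$-permutation $\sigtau'$ with $\Gamma\sigtau'$ equal to the configuration obtained by the grid move. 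Finally, since grid solitaire moves keep one in $\B_n$ (Theorem~\ref{thm:orbit}: $\B_n=\orb(L_n)$ is closed under moves) and $\Gamma$ maps $\A$ bijectively onto $\B_n$, the $3$-permutation $\sigtau'$ lies in $\A$; in particular $3$-permutation moves preserve $\A$.

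With this in hand, $\Gamma$ intertwines the two solitaire dynamics, and the conclusion is formal. Because $3$-permutation moves preserve $\A$ and go, under $\Gamma$, to grid moves, we get $\orb((\desc,\id))\subseteq\A$. Conversely, every basis is reached from $L_n=\Gamma(\desc,\id)$ by a sequence of grid moves (Theorem~\ref{thm:orbit}); lifting that sequence step by step through $\Gamma^{-1}$ — legitimate since at each stage one sits at some $\Gamma\sigtau$ with $\sigtau\in\A$ and the next grid move lifts to a $3$-permutation move to another element of $\A$ by the correspondence above — shows that every element of $\A$ is reached from $(\desc,\id)$ by $3$-permutation solitaire moves, i.e.\ $\A\subseteq\orb((\desc,\id))$. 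Hence $\orb((\desc,\id))=\A=Av_n(\ddincr,\pat)$.

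The step I expect to be the real work is the move correspondence of the second paragraph: verifying that the purely combinatorial conditions in the definition of the $3$-permutation solitaire move are equivalent, under $\Gamma$ and using $\ddincr$-avoidance, to $L$-tromino adjacency on the grid, and that exchanging coordinates along an axis realises the intended marble slide on the inversion sequences without disturbing the other indices. Everything else — the bookkeeping for $(\desc,\id)$ and the passage to orbits — is routine. (If one prefers not to invoke Theorem~\ref{thm:orbit} even for the stability of $\A$, one can instead check directly that a solitaire move never creates a $\ddincr$ or a $\pat$; this is the short case analysis already visible around Figure~\ref{fig:solit moves}.)
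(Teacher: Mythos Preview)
Your proposal is correct and follows essentially the same approach as the paper: compute $\Gamma(\desc,\id)=L_n$, use that the $3$-permutation solitaire was defined precisely by pulling back grid solitaire through $\Gamma$ (the move correspondence), and then transport Theorem~\ref{thm:orbit} through the bijection $\Gamma:\A\to\B_n$ to conclude. The paper states the theorem as an immediate translation and leaves the move-correspondence to the discussion around Figure~\ref{fig:solit moves}; you spell out the two inclusions and flag the upgrade from size constraints on $\x,\y$ to the set-level conditions on $R_\sigma,L_\tau$ as the one nontrivial check, which is exactly the content the paper packages into that figure.
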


\subsection{An orbit correspondence}

A configuration $C$ is \emph{well labeled} if its points are labeled from $1$ to $n$ in such a way that for all $i \in \intval{1,n}$ we have $x_i \leqslant n-i$ and $y_i < i$ and for all $1 \leqslant i < j \leqslant n$, either $x_i > x_j$ or $y_i < y_j$. The first condition corresponds to $C$ being the image of a $3$-permutation through $\Gamma$ and the second was established to be necessary for $C$ to be the image of a $3$-permutation avoiding $\ddincr$ (see Proposition~\ref{prop:image size}).

\begin{lemma}
    Let $C \neq C'$ be two well labeled sparse configurations with the same support. Then $C$ and $C'$ are not in the same solitaire orbit.
\end{lemma}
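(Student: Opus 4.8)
**Proof plan for: Let $C \neq C'$ be two well labeled sparse configurations with the same support. Then $C$ and $C'$ are not in the same orbit.**

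The plan is to use the bijection $\Gamma$ together with the classification of bases already established. Recall that a well labeled configuration is precisely one that can be written $\Gamma\colpair{\sigma}{\tau}$ for some $3$-permutation $\colpair{\sigma}{\tau}$ avoiding $\ddincr$: the condition $x_i \le n-i$, $y_i < i$ places the points in $T_n$, and the condition ``$x_i > x_j$ or $y_i < y_j$ for $i<j$'' is exactly what a $\ddincr$-avoiding pair forces on its inversion sequences (and conversely the label data recovers the inversion sequences, hence $\sigma$ and $\tau$, by the Proposition on inversions). So $C = \Gamma\colpair{\sigma}{\tau}$ and $C' = \Gamma\colpair{\sigma'}{\tau'}$ with both $3$-permutations in $Av_n(\ddincr)$.

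The key point is that the solitaire on well labeled configurations is conjugate, via $\Gamma$, to the solitaire on $\ddincr$-avoiding $3$-permutations (this is exactly the content of the discussion preceding Figure~\ref{fig:solit moves} and the definition of solitaire moves on $\sigtau$): a solitaire move between $p_i$ and $p_j$ is available in a well labeled configuration iff the corresponding move on labels $i<j$ is available on the $3$-permutation, and the two resulting labeled configurations again correspond under $\Gamma$. Moreover solitaire moves preserve the support (the set of occupied triangles in the filling never changes — a move only relocates one point within the triangle it lies in), and they preserve $\ddincr$-avoidance (this is the commented-out lemma; one checks a move cannot create a $\ddincr$ since the third point would be forced into one of the hashed empty regions of Figure~\ref{fig:solit moves}). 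Therefore the orbit of a well labeled configuration consists entirely of well labeled configurations with the same support, and is in $\Gamma$-correspondence with the solitaire orbit of the associated $3$-permutation inside $Av_n(\ddincr)$.

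Now suppose for contradiction that $C$ and $C'$ lie in the same orbit. Pulling back through $\Gamma$, the $3$-permutations $\colpair{\sigma}{\tau}$ and $\colpair{\sigma'}{\tau'}$ are connected by a sequence of solitaire moves. But each solitaire move on a $3$-permutation, being an exchange of the coordinates of two points along a single axis, visibly leaves unchanged the \emph{forgetting} of the labels — more precisely, I claim a move leaves the \emph{unlabeled} configuration's support fixed and, crucially, leaves the multiset of ``which point sits in which cell of the filling'' invariant up to relabeling, so that the underlying unlabeled-and-retriangulated data is rigid. The clean way to get the contradiction: having the same support and being sparse, both $C$ and $C'$ are (by the characterization ``a size-$n$ configuration is a triangle basis iff its filling is $T_n$'', applied to each triangle of the common support) unions of bases on the triangles of the support, and on each such triangle a well labeled basis is uniquely determined by the increasing-label constraint — two well labeled configurations with the same support therefore have, cell by cell, label sets forced to agree, giving $C = C'$, contradiction. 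The main obstacle is pinning down this last rigidity step: one must show that ``well labeled + same support'' forces the label placement to be unique. I would do this by induction using the shifted-sum decomposition — the support dictates the coarse block structure, each shifted sum forces the partition of labels $\intval{1,n}$ between the two sides (exactly as in Lemma~\ref{lem:cut transfer}), and recursing on each side yields uniqueness of the labeling, hence $C=C'$.
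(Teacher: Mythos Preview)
Your approach has two genuine gaps.

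First, you assert that a well labeled configuration is precisely $\Gamma\sigtau$ for some $\sigtau\in Av_n(\ddincr)$. The paper only states (and only proves) that the ordering condition ``$x_i>x_j$ or $y_i<y_j$'' is \emph{necessary} for the preimage to avoid $\ddincr$; it is not sufficient. Concretely, take $n=5$ and the configuration $\{(4,0),(0,0),(1,1),(0,2),(0,3)\}$ (which is sparse, in fact a basis). The labeling $p_1=(4,0)$, $p_2=(1,1)$, $p_3=(0,0)$, $p_4=(0,2)$, $p_5=(0,3)$ satisfies all the well-labeling conditions, but the associated $3$-permutation is $(52134,\,41532)$, which contains $\ddincr$ at positions $(2,4)$. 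So you cannot pull $C'$ back through $\Gamma$ into $Av_n(\ddincr)$, and the conjugacy with the $3$-permutation solitaire is unavailable.

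Second, and more seriously, your ``rigidity'' step claims that a sparse support admits a unique well labeling, which would make the hypothesis $C\neq C'$ vacuous. The same example refutes this: the labeling $p_1=(4,0)$, $p_2=(0,0)$, $p_3=(1,1)$, $p_4=(0,2)$, $p_5=(0,3)$ is a second valid well labeling of the same support (this one does come from $Av_5(\ddincr)$, namely $(51324,\,15432)$). So two distinct well labeled sparse configurations with the same support genuinely exist, and your inductive plan via shifted sums cannot force them to coincide. Note also that the cut-existence Lemma~\ref{lem:cut exist} you would need for that induction requires avoiding $\pat$ as well, and applies to bases rather than arbitrary sparse configurations.

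The paper's proof is entirely different and does not go through $\Gamma$ at all. For a transposition of labels $i<j$ with $x_i\le x_j$, $y_i\le y_j$, it argues directly that any sequence of labeled solitaire moves exchanging $p_i$ and $p_j$ must recruit $a=x_j-x_i$ points labeled $>j$ (to push $p_j$ left) and $b=y_j-y_i$ points labeled $<i$ (to push $p_i$ up), forcing $a+b+2$ points into the smallest triangle containing $p_i$ and $p_j$, which has size only $a+b+1$ --- contradicting sparsity (preserved along the orbit). The general case is then reduced to this one.
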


\begin{proof}
    Assume the only difference between $C$ and $C'$ is an exchange of labels $i$ and $j$ with $i<j$. Then since they are well labeled, without loss of generality $x_i \leqslant x_j$ and $y_i \leqslant y_j$ in $C$. Consider a sequence of solitaire moves that would exchange $p_i$ and $p_j$. It requires $a = x_j - x_i$ points with label greater than $j$ to move $p_j$ left (no less since using the same point twice requires to moved it to the left too) and $b = y_j - y_i$ points with label smaller than $i$ to move $p_i$ up. That makes $a+b+2$ points in the smallest triangle that contains $p_i$ and $p_j$, but this triangle only has size $a+b+1$, so that is impossible since all configurations in the orbit of a sparse configuration are also sparse (see the characterization of the solitaire orbits in \cite[Theorem 3]{SaSc23}, which generalizes Theorem~\ref{thm:orbit}).

    This argument can then be generalized to a larger permutation of labels.
\end{proof}

\begin{corol}
    $\Gamma$ induces a bijection on each solitaire orbit of $Av_n(\ddincr)$ with a solitaire orbit of sparse configurations.
\end{corol}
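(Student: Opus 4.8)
The plan is to factor $\Gamma$ through its ``labeled'' version and combine three ingredients: $\Gamma$ is injective once one remembers labels, the solitaire on $3$-permutations was defined precisely so that this labeled version is equivariant, and the preceding Lemma is exactly what allows one to forget the labels again inside a single orbit. Concretely, I would work with three dynamical systems --- the solitaire on $Av_n(\ddincr)$ (the one pulled back through $\Gamma$), the solitaire on \emph{labeled} configurations, and the solitaire on configurations --- and the two natural maps: $\widetilde\Gamma\colon Av_n(\ddincr)\to\{\text{labeled configurations}\}$, sending $\sigtau$ to $\Gamma\sigtau$ with each point $p_i$ carrying the label $i$, and the forgetful map $\pi$ from labeled configurations to configurations, so that $\Gamma=\pi\circ\widetilde\Gamma$. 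By the recovery of the inversion sequences from a labeled point set, $\widetilde\Gamma$ is injective; by the sparsity theorem together with the inequalities $x_i\leqslant n-i$, $y_i<i$ and the partial-order condition ($x_i>x_j$ or $y_i<y_j$ for $i<j$), its image consists of well-labeled sparse configurations.

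Next I would establish the orbit correspondence at the labeled level. The definition of a solitaire move on $3$-permutations makes $\widetilde\Gamma$ send $3$-permutation moves to labeled-configuration moves; the point to check is that this correspondence of moves is a bijection. If two points of $\widetilde\Gamma\sigtau$ are in position for a solitaire move, then avoidance of $\ddincr$ forces their labels to be ordered the way Figure~\ref{fig:solit moves} requires, so every labeled-configuration move out of $\widetilde\Gamma\sigtau$ is the image of a $3$-permutation move on $\sigtau$ (and conversely); and since such a move cannot create a $\ddincr$, it lands again in $\widetilde\Gamma(Av_n(\ddincr))$. Hence for a solitaire orbit $\mathcal O$ of $Av_n(\ddincr)$ the set $\widetilde\Gamma(\mathcal O)$ is an entire solitaire orbit of labeled configurations, all of whose elements are well-labeled and sparse, and $\widetilde\Gamma$ restricts to a bijection $\mathcal O\to\widetilde\Gamma(\mathcal O)$.

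It then remains to descend through $\pi$. The forgetful map carries labeled-configuration moves to configuration moves, and every configuration move out of $\pi(C)$ lifts, by moving the same labeled point, to a labeled move out of $C$; so $\pi(\widetilde\Gamma(\mathcal O))$ is an entire solitaire orbit of configurations, each of which is sparse since it is $\Gamma\sigtau$ for some $\sigtau\in Av_n(\ddincr)$. By the preceding Lemma, two distinct well-labeled sparse configurations with the same support lie in different orbits; applying this inside the single orbit $\widetilde\Gamma(\mathcal O)$ shows that $\pi$ is injective on it. Therefore $\Gamma|_{\mathcal O}=\pi|_{\widetilde\Gamma(\mathcal O)}\circ\widetilde\Gamma|_{\mathcal O}$ is injective, and it is surjective onto $\pi(\widetilde\Gamma(\mathcal O))$ by construction, so $\Gamma$ restricts to a bijection between the orbit $\mathcal O$ and the solitaire orbit of sparse configurations $\pi(\widetilde\Gamma(\mathcal O))$.

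The equivariance statements and the fact that a move preserves $Av_n(\ddincr)$ are routine from the definitions, so the substantive step --- and the reason the preceding Lemma is proved --- is the last one: a priori a labeled-configuration orbit could contain two labeled configurations sharing a support, which would make $\Gamma$ non-injective on the corresponding orbit of $Av_n(\ddincr)$, and the Lemma (through its sparsity/triangle-counting argument) is what rules this out. The one point I would be careful to spell out in full is the bijectivity of the move-correspondence at the labeled level, i.e.\ that no labeled move of $\widetilde\Gamma\sigtau$ escapes the $3$-permutation picture: this is exactly where $\ddincr$-avoidance is used, to pin down the labels of two adjacent points.
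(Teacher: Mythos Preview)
Your proof is correct and follows essentially the same approach as the paper, which dispatches the corollary in two lines: injectivity from the preceding Lemma, surjectivity by lifting solitaire moves to build a preimage. Your version is simply a more detailed and carefully articulated rendering of the same argument, making explicit the factorization $\Gamma=\pi\circ\widetilde\Gamma$ through labeled configurations and the equivariance at each level, which the paper leaves implicit.
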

\begin{proof}
    The previous lemma gives injectivity, and surjectivity is clear as one can use solitaire to build a pre-image for each configuration in the target orbit.
\end{proof}

\section{Discussions}
\label{sec:discu}

\subsection{Enumeration}
\label{sub:bounds}

We now know that $\A$ and the set of triangle bases of size $n$ have the same cardinality, but the question of its value remains open. 

At the time this paper was written, the best known bounds were the following bounds from \cite{SaSc23}:
\[3n!\leqslant |\B_n| \leqslant c\left(\frac{e}2\right)^nn^{n -\frac52} \text{ with } c >0.\]

They transfer to the class $Av_n(\ddincr, \pat)$ through our bijection. To our knowledge, those were the first bounds on this class.
The lower bound can be obtained directly on $3$-permutations by noticing that for all $\tau \in \S_n$, $\colpair{\desc_n}{\tau} \in \A$, and so are its two rotations.

\hfill

The decomposition into shifted sums from Section~\ref{sec:sums} can be used to write equations on the generating function of triangle bases or $3$-permutations. These equations could then be used to obtain more precise enumerative results. In a following paper with Elvey Price and Thévenin \cite{AsymptBases26}, we use the shifted-sum decomposition to derive the asymptotic behavior of the numbers $|\B_n|$, and show that they exhibit a stretched exponential behavior. More precisely,
\[ |\B_n| \sim c n!e^{\sqrt{12n}}n^{5/12}\]
for some $c\approx0.00098107546$.

\subsection{Random sampling}
\label{sub:sampling}

The solitaire defines a Markov chain on triangle bases. The diameter of the reconfiguration graph is $O(n^3)$ \cite{SaSc23}, and we conjecture that its mixing time is also \emph{polynomial} since the graph has strong connectivity properties. This would allow us to sample uniformly triangle bases, and hence $3$-permutations of $\A$ by our bijection.

\subsection{Generalization of the constructions}
\label{sub:other classes}

Our mapping $\Gamma$ is well defined and invertible on all of $Av(\ddincr)$ if we consider labeled configurations, so it might be used to study bijectively other subclasses of $Av(\ddincr)$. An interesting question would be whether the other orbits for the solitaire on permutations can be described as pattern related classes, for instance pattern avoidance classes, pattern covering classes or a mix of the two.

The mapping $\Gamma$ could also be generalized to $d$-permutations by using each permutation to compute a coordinate, the image set would then be configurations of dimension $d-1$. Candidates for interesting image sets would be bases of higher dimensional TEP-subshift \cite{SolitIndep[SaSc25]} or higher dimensional fine mixed subdivisions. The mapping $\Psi$ has already been generalized to the later (see for instance \cite{AcyclicSystAC2013}), but the image permutations set is harder to describe. In particular, it still not known whether this set can be described as a class of permutations avoiding a set of patterns of length $2$ or not.

\subsection{GD-flips on permutations}
\label{sub:GD-flips}

It would also be interesting to investigate how performing a GD-flip on a lozenge tiling $\loz$ changes the $3$-permutation $\Psibis(\loz)$. This might also help us to understand when do two $3$-permutations have the same image through $\Gamma$. 

In Figure~\ref{fig:psi_not_injective}, one can see that performing a GD-flip of size $0$ does not change the $3$-permutation. This motivates the following conjecture:
\begin{conj}
    Two lozenge tilings have the same image through $\Psibis$ if and only if they differ by a sequence of size $0$ GD-flips.
\end{conj}
The consequence on the $3$-permutation of a larger flip seems more difficult to grasp. With the same notations as in Figure~\ref{fig:GD-flip}, it seems that the points $i$, $j$ and $k$ see respectively their $z$, $x$ and $y$ coordinate decreased by at least the size of the GD-cycle, but quantifying the ``at least'' seems to require a good knowledge of the permutation outside of the flip.

A related question is ``when do an occurrence of $\pat$ in the $3$-permutation corresponds to a GD-cycle in the lozenge tiling ?''. It is clear that not all occurrences of $\pat$ correspond to GD-cycle (see Figure~\ref{fig:flip issues}). Looking at some examples, one can conjecture that there is a notion of ``minimal'' occurrence of $\pat$ in the permutation that would correspond to occurrences of $\pat$ corresponding to GD-cycles.
\begin{figure}[ht]
    \centering
    \includegraphics[page=7, height=3.5cm]{Figures/Fig_Psi.pdf}
    \caption{\textbf{Left:} $(1,2,5)$ and $(1,3,5)$ are two occurrences of $\pat$ but only $(1,3,5)$ can be flipped.
    \textbf{Right:} There are five occurrences of $\pat$: $(1,2,6)$, $(1,4,6)$, $(1,5,6)$, $(3,4,6)$ and $(3,5,6)$. The last three correspond to a size $0$ GD-flip, which does not change the $3$-permutation. A sequence of size $0$ GD-flips eventually allows us to perform a size $1$ GD-flip on $(1,2,6)$.}
    \label{fig:flip issues}
\end{figure}

\subsection{Large random permutations} 
\label{sub:permuton}

The combinatorial (and especially bijective) understanding of pattern avoiding permutations gives detailed information about their structure, which can be applied to study the properties of large random objects. For instance, the theory of \emph{permutons} describes the scaling limit of many classes of pattern avoiding permutations (see e.g.~\cite{permuton}).

One can ask the same questions about large random $3$-permutations \cite{BoLi25}, perhaps our bijection or the decomposition into shifted sums can help study the properties of large random permutations of $Av_n(\ddincr,\pat)$.

\section*{Acknowledgments}

The author wishes to thank Nicolas Bonichon for providing the library used to generate the $3$-permutations Figure~\ref{fig:forb pattern} as well as other visualization tools and useful references on $3$-permutations. They also wish to thank Yuan Yao for pointing to the reference to the reverse bijection, as well as interesting discussions on GD-flips and potential generalizations. Finally, they thank Baptiste Louf for its help to make the article clearer as well as the anonymous reviewer who found a simplification in the proof of Theorem~\ref{thm:sparse}.

\subsection*{Founding}

This work was partially supported by the ANR project Combiné (ANR-19-CE48-0011).

\bibliographystyle{alpha}
\bibliography{PermutBasis}

\end{document}